\newcommand{\abs}[1]{\left\lvert#1\right\rvert}
\newcommand{\Z}{\ensuremath{\mathbb{Z}}}
\newcommand{\Q}{\ensuremath{\mathbb{Q}}}
\newcommand{\M}{\mathcal{M}}
\newcommand{\isom}{\cong} %The isomorphism symbol
\renewcommand{\P}{\ensuremath{\mathbb{P}}}
\renewcommand{\H}{\ensuremath{\mathcal{H}}}
\renewcommand{\bar}[1]{\overline{#1}}
\renewcommand{\Im}{\ensuremath{\operatorname{Im}}}
\DeclareMathOperator{\Span}{Span}
\DeclareMathOperator{\Ker}{Ker}
\renewcommand{\Im}{\mathop{\mathrm{Im}}}
\DeclareMathOperator{\val}{Valence}
\DeclareMathOperator{\even}{even}
\DeclareMathOperator{\odd}{odd}
\newcommand{\cA}{\mathcal{A}}
\newcommand{\cH}{\mathcal{H}}
\newcommand{\cK}{\mathcal{K}}
\newcommand{\cM}{\mathcal{M}}
\newcommand{\cQ}{\mathcal{Q}}
\newcommand{\cR}{\mathcal{R}}
\newcommand{\cV}{\mathcal{V}}
\newcommand{\cW}{\mathcal{W}}
\newcommand{\cX}{\mathcal{X}}
\newcommand{\bA}{\mathbf{A}}
\newcommand{\bE}{\mathbf{E}}
\newcommand{\bF}{\mathbf{F}}
\newcommand{\bK}{\mathbf{K}}
\newcommand{\bO}{\mathbf{O}}
\newcommand{\bS}{\mathbf{S}}
\theoremstyle{plain}
\newtheorem{theorem}{Theorem}
\numberwithin{theorem}{section}
\newtheorem{thm}[theorem]{Theorem}
\newtheorem{cor}[theorem]{Corollary}
\newtheorem{lemma}[theorem]{Lemma}
\newtheorem{lem}[theorem]{Lemma}
\theoremstyle{definition}
\newtheorem{Definition/Theorem}[theorem]{Definition/Theorem}
\newtheorem{Definition/Proposition}[theorem]{Definition/Proposition}
\newtheorem{Def}[theorem]{Definition}
\newtheorem{Corollary/Definition}[theorem]{Corollary/Definition}
\theoremstyle{remark}
\renewcommand{\H}{\cH}
\renewcommand{\M}{\cM}
\newcommand{\Mbar}{\bar{\cM}}
\renewcommand{\setminus}{\smallsetminus}
\newcommand{\bSP}{\mathbf{SP}}
\begin{document}

\title{Pullbacks of $\kappa$ classes on $\Mbar_{0,n}$}
\author{Rohini Ramadas}
\email{rohini\_ramadas@brown.edu}
\address{Department of Mathematics\\Brown University\\Providence, RI}
\thanks{This work was partially supported by NSF grants
0943832, 1045119, 1068190, and 1703308.}
\subjclass[2010]{14H10 (primary), 14N99, 14M99, 20C30} 

\begin{abstract}
 
The moduli space $\Mbar_{0,n}$ carries a codimension-$d$ cycle class $\kappa_{d}$. We consider the subspace $\cK^{d}_{n}$ of $A^d(\Mbar_{0,n},\Q)$ spanned by pullbacks of $\kappa_d$ via forgetful maps. We find a permutation basis for $\cK^{d}_{n}$, and describe its annihilator under the intersection pairing in terms of $d$-dimensional boundary strata. As an application, we give a new permutation basis of the divisor class group of $\Mbar_{0,n}$.

%The moduli space $\Mbar_{0,n}$ carries a codimension $d$ kappa class $\kappa_{d}$ in its Chow group $A^d(\Mbar_{0,n})$. We consider the subspace $\cK^{d}_{n}$ of $A^d(\Mbar_{0,n},\Q)$ spanned by classes of the form $(\pi_T)^{*}(\kappa_{d})$, where $T$ ranges over all subsets of $\{1,\ldots,n\}$ (here, $\pi_T$ denotes the forgetful map to $\Mbar_{0,T}$). We show that $\{(\pi_T)^{*}(\kappa_{d})\thickspace|\thickspace\abs{T}\ge(d+3), \abs{T}\equiv (d+3)\mod 2\}$ is an $S_n$-equivariant basis for $\cK^{d}_{n}$. We also show that the annihilator of $\cK^{d}_n$ under the intersection pairing is the subspace of $A_d(\Mbar_{0,n})\otimes\Q$ spanned by the fundamental classes of dimension $d$ boundary strata whose dual trees have at least two vertices with valence $4$ or more. As an application, we give a new permutation basis of the divisor class group of $\Mbar_{0,n}$.

%The \textit{$\kappa$ rings} are the $\Q$-subalgebras of tautological rings of moduli spaces of curves that are generated by $\kappa$ classes and their images under tautological morphisms. $\kappa$ rings play a large role in the study of Gromov-Witten theory, mirror symmetry and mathematical physics. Due to works of Faber, Pandharipande, Pixton, Zagier, Zvonkine, and several others \cite{}, there is a lot known about the structure of $\kappa$ rings. 
\end{abstract}
\maketitle

\section{Introduction}\label{sec:Intro}

Mumford \cite{Mumford1983} introduced the tautological, codimension-$d$ class $\kappa_d$ in the cohomology/Chow group of the moduli space $\M_{g,n}$. This class extends to the moduli space $\Mbar_{g,n}$ of stable curves as well as to various partial compactifications of $\M_{g,n}$. Ring-theoretic relations involving $\kappa$ classes have been studied by Faber, Ionel, Pandharipande, Pixton, Zagier, Zvonkine, and several others, and play a role in the study of Gromov-Witten theory and mirror symmetry (\cite{faber_1999,Ionel2005,Pandharipande2012,PandharipandePixton2013,PPZ2016}; see \cite{Pandharipande2011, Pandharipande2016} for overviews).   

Here, we investigate $\kappa$ classes on $\Mbar_{0,n}$ from a linear-algebraic and representation-theoretic perspective. The symmetric group $S_n$ acts on $\Mbar_{0,n}$, and thus acts on its cohomology and Chow groups. Given any $T\subseteq\{1,\ldots,n\}$ with $\abs{T}\ge3$, there is a forgetful map $\pi_T:\Mbar_{0,n}\to\Mbar_{0,T}$. We set $\kappa_d^T:=\pi_T^*(\kappa_d)$, and consider the subspace $\cK^d_n\subseteq A^d(\Mbar_{0,n},\Q)$ spanned by $\{\kappa_d^T\}_{T\subseteq\{1,\ldots,n\}}$; this subspace is clearly $S_n$-invariant. Recall that a \textit{permutation basis} of a $G$-representation is one whose elements are permuted by the action of $G$. We show:

\smallskip

\noindent\textbf{Theorem A.} (Theorem \ref{thm:Main}\ref{it:KappaBasis}.) \textit{If $n\ge4$ and $1\le d\le n-3$, then $\cK^d_n$ has a permutation basis given by $\{\kappa_{d}^T\thickspace|\thickspace\abs{T}\ge(d+3), \abs{T}\equiv (d+3)\mod 2\}$.}

\smallskip

%\textcolor{red}{We find that $\cK^1_n=A^{1}(\Mbar_{0,n},\Q)$, and, in general, $\cK^d_n$ makes up a comparatively large part of $A^{d}(\Mbar_{0,n},\Q)$ for small $d$ \textcolor{red}{really?}. }

\subsection{Does $A^d(\Mbar_{0,n},\Q)$ have a permutation basis?} Getzler \cite{Getzler1995} and Bergstr\"om-Minabe \cite{BergstromMinabe2013} have given algorithms to compute the character of $A^d(\Mbar_{0,n},\Q)$ as an $S_n$-representation. It is not clear from these algorithms whether $A^d(\Mbar_{0,n},\Q)$ has a permutation basis. Farkas and Gibney \cite{FarkasGibney2003} have given a permutation basis for $A^{1}(\Mbar_{0,n},\Q)$. Theorem \ref{thm:Main} implies that $\cK^1_n=A^{1}(\Mbar_{0,n},\Q)$, so:

\smallskip

\noindent\textbf{Theorem B.} \textit{The set $\{\kappa_{1}^T\thickspace|\thickspace\abs{T}\ge4, \abs{T} \text{ even}\}$ is a permutation basis of $A^{1}(\Mbar_{0,n},\Q)$.}

\smallskip

The basis given by Theorem B is different from the one given in \cite{FarkasGibney2003}, which consists of certain boundary divisors and $\psi$ classes. For odd $n$, the two bases are isomorphic as $S_n$-sets, but for even $n$ they are not. 

Silversmith and the author \cite{RamadasSilversmith1} have produced a permutation basis for $A_2(\Mbar_{0,n},\Q)$, using Theorem \ref{thm:Main} as an ingredient. Very recent work of Castravet and Tevelev \cite{CastravetTevelev2020} on the derived category of $\Mbar_{0,n}$ implies that $A^*(\Mbar_{0,n},\Q)=\bigoplus_{d=0}^{n-3}A^{d}(\Mbar_{0,n},\Q)$ has a permutation basis; its elements, however, are not of pure degree. The question of whether or not $A^d(\Mbar_{0,n},\Q)$ has a permutation basis for all $d$ and $n$ remains open.  

\subsection{The dual story in $A_d(\Mbar_{0,n},\Q)$ and the proof of Theorem \ref{thm:Main}} There is an $S_n$-equivariant intersection pairing $A_d(\Mbar_{0,n},\Q)\times A^d(\Mbar_{0,n},\Q)\to\Q$. To prove Theorem A, we show:

\smallskip

\noindent\textbf{Theorem C.} (Theorem \ref{thm:Main}\ref{it:KappaPerpExactly},\ref{it:KappaTypeIPairingPerfect}.) If $n\ge4$ and $1\le d\le n-3$, we have
\begin{enumerate} 
\item \textit{The annihilator of $\cK^d_n\subseteq A^d(\Mbar_{0,n},\Q)$ is the subspace $\cV_{d,n}\subseteq A_d(\Mbar_{0,n},\Q)$ spanned by boundary strata whose dual trees have two or more vertices with valence at least four.}
\item \textit{$\cQ_{d,n}:=\frac{A_d(\Mbar_{0,n},\Q)}{\cV_{d,n}}$ is the dual of $\cK^d_n$.}
\end{enumerate}

\smallskip

It is straightforward to show that $\cV_{d,n}$ is contained in the annihilator of $\cK^d_n$, but to show equality involves a complicated induction on $n$. We use the fact that if $\pi$ denotes the forgetful morphism from $\Mbar_{0,n+1}$ to $\Mbar_{0,n}$, then:

\smallskip

\noindent\textbf{Theorem D.} (Theorem \ref{thm:Main}\ref{it:Sequences}) \textit{If $n\ge4$ and $1\le d\le n-3$, then we have the following (dual) exact sequences:}
    \begin{align*}
      &0\to\cQ_{d,n}\xrightarrow{\pi^*}{}\cQ_{d+1,n+1}\xrightarrow{\pi_*}{}\cQ_{d+1,n}\to0\\
       &0\to\cK^{d+1}_{n}\xrightarrow{\pi^*}{}\cK^{d+1}_{n+1}\xrightarrow{\pi_*}{}\cK^d_{n}\to0
    \end{align*}

\smallskip

It is difficult to use induction to study $A^d(\Mbar_{0,n},\Q)$, partly due to the fact that $A^d(\Mbar_{0,n},\Q)\xrightarrow{\pi^*}{}A^d(\Mbar_{0,n+1},\Q)\xrightarrow{\pi_*}{}A^{d+1}(\Mbar_{0,n},\Q)$ is not exact. This failure of exactness is also responsible for the fact that the dimensions of $A_d(\Mbar_{0,n},\Q)=A^{n-3-d}(\Mbar_{0,n},\Q)$ grow exponentially with $n$, whereas $\dim(\cK^{n-3-d}_n)$ grows as a degree-$d$ polynomial in $n$. 

%\textcolor{red}{The pullback $\pi^*:A^d(\Mbar_{0,n},\Q)\to A^d(\Mbar_{0,n+1},\Q)$ is injective, and the pushforward $\pi_*:A^d(\Mbar_{0,n+1},\Q)=A_{n-3-d}(\Mbar_{0,n+1},\Q) \to A_{n-2-d}(\Mbar_{0,n},\Q)=A^{d+1}(\Mbar_{0,n},\Q)$ is surjective. Since $\pi$ has positive relative dimension, $\pi_*\circ\pi^*=0$. However, $A^d(\Mbar_{0,n},\Q)\xrightarrow{\pi^*}{}A^d(\Mbar_{0,n+1},\Q)\xrightarrow{\pi_*}{}A^{d+1}(\Mbar_{0,n},\Q)$ is not exact. Roughly speaking, this implies that there are codimension-$d$ classes on $\Mbar_{0,n+1}$ that ``cannot be built from" codimension-$d$ classes on $\Mbar_{0,n}$ together with codimension-$(d+1)$ classes on $\Mbar_{0,n}$, which makes some inductive arguments tricky in the setting of Chow groups. This different behavior also accounts for the fact that, for fixed $d>1$, the dimensions of $A_d(\Mbar_{0,n},\Q)=A^{n-3-d}(\Mbar_{0,n},\Q)$ grow exponentially with $n$, whereas those of $\cK^{n-3-d}_n$ grow as a degree-$d$ polynomial in $n$.}

\subsection{Significance for dynamics on $\M_{0,n}$} \textit{Hurwitz correspondences} are a class of multivalued dynamical systems on $\M_{0,n}$. They were introduced by Koch \cite{Koch2013} in the context of Teichm\"uller theory and complex dynamics on $\P^1$, and their dynamics were studied by the author \cite{Ramadas2015, Ramadas2016, Ramadas2019}. A Hurwitz correspondence $\H$ on $\M_{0,n}$ induces a linear pushforward action on $\cQ_{d,n}$, and the $d$-th dynamical degree of $\H$ (a numerical invariant of algebraic dynamical systems) is the largest eigenvalue of this action \cite{Ramadas2015}. Theorem \ref{thm:Main} can be used to re-interpret Theorem 10.6 of \cite{Ramadas2015} to conclude that $\H$ acts on pullbacks of $\kappa$ classes, and that this action encodes important information about the dynamics of $\H$:  

\smallskip

\noindent\textbf{Theorem E.} \textit{Suppose $\H$ is a Hurwitz correspondence on $\M_{0,n}$. If $1\le d\le n-3$, then  $\cK^d_n$ is invariant under the pullback $\H^*:A^d(\Mbar_{0,n},\Q)\to A^d(\Mbar_{0,n},\Q)$, and the $d$-th dynamical degree of $\H$ is the largest eigenvalue of the action of $\H^*$ on $\cK^d_n$.}

%\begin{thm}\label{thm:Main} For $n\ge4$ and $d$ such that $1\le d\le n-3$:  
%\begin{enumerate}[label=(\roman*)]
%\item We have $\cK_n^d= \cV_{d,n}^{\perp}$.\label{it:KappaPerpExactly}
%\item The pairing $\cQ_{d,n}\times\cK_{n}^d\to\Q$ is perfect. \label{it:KappaTypeIPairingPerfect}
%\item The set $\{\kappa_{d}^T\thickspace|\thickspace\abs{T}\ge(d+3), \abs{T}\equiv (d+3)\mod 2\}$ is an $S_n$-equivariant basis for $\cK^{d}_{n}$. \label{it:KappaBasis}
%\item The $S_n$ actions on $\cQ_{d,n}$ and $\cK^{d}_{n}$ are isomorphic to the permutation representation induced by the natural action of $S_n$ on the set $\{T\subseteq[n]\thickspace|\thickspace\abs{T}\ge(d+3), \abs{T}\equiv (d+3)\mod 2\}$.  \label{it:Representations}
%\item The following (dual) sequences are exact:\label{it:Sequences}
%    \begin{align}
%      &0\to\cQ_{d,n}\xrightarrow{\pi^*}{}\cQ_{d+1,n+1}\xrightarrow{\pi_*}{}\cQ_{d+1,n}\to0\label{eq:seqQ}\\
%       &0\to\cK^{d+1}_{n}\xrightarrow{\pi^*}{}\cK^{d+1}_{n+1}\xrightarrow{\pi_*}{}\cK^d_{n}\to0 \label{eq:seqK}
%    \end{align}
%\end{enumerate}
%\end{thm}
%

%\textcolor{red}{\subsection*{Organization} In Section \ref{sec:(co)homology}, we introduce the quotient $\cQ_{d,n}$ of $A_{d}(\Mbar_{0,n},\Q)$, the subspace $\cK^d_n$ of $A^{d}(\Mbar_{0,n},\Q)$, and formulate, in purely combinatorial terms, the intersection pairing $\cQ_{d,n}\times\cK^d_n\to\Q$. In Section \ref{sec:Proofs}, we state and prove our results. }

\subsection*{Acknowledgements}
I am grateful to David Speyer, Rob Silversmith, and Renzo Cavalieri for useful conversations. Rob Silversmith noticed a patten in my experimental data: the dimensions of $\cQ_{d,n}$ can be expressed as sums of binomial coefficients. This observation led me to conjecture the correct dual basis of $\cQ_{d,n}$. My initial expression of the dual basis elements was purely combinatorial; Renzo Cavalieri observed that these elements could be expressed as functionals induced by pairing with kappa classes.

\subsection*{Notation and conventions} For $n$ a positive integer, we denote by $[n]$ the set $\{1,\ldots,n\}$. For $\bA$ a finite set, we denote by $\Q\bA$ the free $\Q$-vector space on $\bA$. For $\cV$ a vector space, we denote by $\cV^{\vee}$ its dual. For a linear map $\mu:\cV\to\cX$, we denote by $\mu^{\vee}$ its dual map. If $\cW$ is a subspace of $\cV$, we denote by $\cW^\perp$ its annihilator in $\cV^{\vee}$.

\section{Cycle classes on $\Mbar_{0,n}$}\label{sec:(co)homology}

The Chow group $A_{d}(\Mbar_{0,n})$ is a finitely generated free abelian group generated, though not freely, by the fundamental classes of $d$ dimensional boundary strata \cite{Keel1992, KontsevichManin1994}. We set $\cA_{d,n}:=A_{d}(\Mbar_{0,n},\Q)$, and $\cA^{d}_n:=\cA_{n-3-d,n}$. There is an $S_n$-equivariant non-degenerate intersection pairing $\cA_{d,n}\times \cA^d_n\to \Q$; this identifies $\cA^d_n$ with $\cA_{d,n}^{\vee}$.  

A stable $n$-marked tree is a tree $\sigma$ with $n$ marked legs such that every vertex has valence at least $3$ (counting the legs). Boundary strata on $\Mbar_{0,n}$ are in bijection with stable $n$-marked trees; if $\sigma$ is a stable $n$-marked tree we denote by $X_\sigma$ the corresponding boundary stratum on $\Mbar_{0,n}$. Boundary strata are isomorphic to products of smaller moduli spaces: $X_\sigma\isom \prod\Mbar_{0,\val(v)}$, where the product is over vertices $v$ of $\sigma$. We conclude that $X_\sigma$ is positive-dimensional if and only of its dual tree has at least one vertex with valence at least four. If $\sigma$ has exactly one vertex $v$ with valence at least four, then $X_\sigma$ is isomorphic to $\Mbar_{0,n'}$, where $n'=\val(v)$, since the factors in the above product decomposition of $X_\sigma$ corresponding to vertices other than $v$ are all isomorphic to single-point spaces.

\begin{Def}
We say that a positive-dimensional boundary stratum $X_\sigma$ is \textit{Type I} if its dual tree $\sigma$ has  exactly one vertex with valence at least four; in this case we also say that $\sigma$ is a Type I stable tree. We say that a positive-dimensional boundary stratum $X_\sigma$ is \textit{Type II} if its dual tree $\sigma$ has two or more vertices with valence at least four; in this case we also say that $\sigma$ is a type II stable tree. For $n\ge4$ and $d=1,\ldots, n-4$, we set $\cV_{d,n}\subset \cA_{d,n}$ to be the subspace generated by the fundamental classes of Type II boundary strata. We set $\cQ_{d,n}$ to be the quotient $\cA_{d,n}/\cV_{d,n}$. Note that since $\cV_{d,n}$ is $S_n$-invariant, $\cQ_{d,n}$ inherits an action of $S_n$. Also note that $\cQ_{d,n}$ is generated by the fundamental classes of Type I boundary strata. 
\end{Def}

\begin{Def}
Suppose $\sigma$ is stable $n$-marked tree and $v$ a vertex on $\sigma$. We obtain from the pair $(\sigma, v)$ a set partition $\Pi_*(\sigma,v)$ of $[n]$ as follows: $i$ and $j$ are in the same part of $\Pi_*(\sigma, v)$ if and only if the $i$- and $j$-marked legs on $\sigma$ are on the same connected component of $\sigma\setminus \{v\}$. Note that the number of parts of $\Pi_*(\sigma, v)$ equals the valence of $v$. If $\sigma$ is Type I and $v$ is its unique vertex with valence at least four, then the partition $\Pi_*(\sigma,v)$ is intrinsically associated to $\sigma$, so we denote it by $\Pi_*(\sigma)$. In this case we have $\dim(X_{\sigma})=\abs{\Pi_*(\sigma)}-3(=\val(v)-3)$. 
\end{Def}

\begin{lem}\label{lem:PartitionDeterminesClass}
Suppose that $\sigma_1$ and $\sigma_2$ are two Type I stable $n$-marked trees, and suppose $\Pi_*(\sigma_1)=\Pi_*(\sigma_2)$. Then $[X_{\sigma_1}]=[X_{\sigma_2}]\in \cA_{d,n}$, where $d=\dim(X_{\sigma_1})=\dim(X_{\sigma_2})$. 
\end{lem}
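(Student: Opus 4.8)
The plan is to exhibit $\sigma_1$ and $\sigma_2$ as refinements of a common stable tree $\bar\sigma$, and to carry out the comparison inside the product-of-moduli-spaces decomposition of $X_{\bar\sigma}$, where it collapses to the fact that $A_0(\Mbar_{0,m},\Q)\isom\Q$. Write $\Pi:=\Pi_*(\sigma_1)=\Pi_*(\sigma_2)=\{B_1,\dots,B_{n'}\}$ with $n'=\abs{\Pi}$, and let $v_i$ be the unique valence-$\ge4$ vertex of $\sigma_i$. Since every other vertex of $\sigma_i$ is trivalent, each connected component of $\sigma_i\setminus\{v_i\}$, together with the half-edge joining it to $v_i$, is a trivalent tree whose legs are the elements of some part $B_k$ plus one extra ``root'' half-edge; for $\abs{B_k}\le2$ this tree is forced, so $\sigma_1$ and $\sigma_2$ can differ only within the branches attached at parts of size $\ge3$. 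First I would form the stable tree $\bar\sigma$ obtained from $\sigma_1$ (equivalently $\sigma_2$) by contracting, for each $k$, every edge internal to the branch attached at $B_k$. Then $\bar\sigma$ has one vertex $v$ of valence $n'$ together with one vertex $w_k$ of valence $\abs{B_k}+1$ for each $k$, and $\bar\sigma$ is a contraction of both $\sigma_1$ and $\sigma_2$; correspondingly $X_{\bar\sigma}\isom\Mbar_{0,n'}\times\prod_k\Mbar_{0,\abs{B_k}+1}$, and $X_{\sigma_1},X_{\sigma_2}$ are boundary strata of $\Mbar_{0,n}$ lying in $X_{\bar\sigma}$.

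Next I would identify these strata inside the product. Under the standard identification of boundary strata of $X_{\bar\sigma}$ with products of boundary strata of the factors \cite{Keel1992}, a stratum $X_{\sigma'}\subseteq X_{\bar\sigma}$ decomposes as $\prod_u X_{(\sigma')_u}$, where $(\sigma')_u$ is the subtree of $\sigma'$ lying over the vertex $u$ of $\bar\sigma$. For $\sigma_i$: no edge at $v_i$ is contracted, so $(\sigma_i)_v$ is the star at $v$, contributing the entire factor $\Mbar_{0,n'}$; and $(\sigma_i)_{w_k}$ is the whole branch of $\sigma_i$ at $B_k$, a trivalent tree on $\abs{B_k}+1$ marks, hence a $0$-dimensional boundary stratum $p_i^{(k)}\in\Mbar_{0,\abs{B_k}+1}$. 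Thus $X_{\sigma_i}$ corresponds to $\Mbar_{0,n'}\times\prod_k\{p_i^{(k)}\}$; this matches the dimension count $\dim X_{\sigma_i}=\abs{\Pi}-3=n'-3=d$. Since $\Mbar_{0,m}$ is rational, $A_0(\Mbar_{0,m},\Q)\isom\Q$, so $[p_1^{(k)}]=[p_2^{(k)}]$ in $A_0(\Mbar_{0,\abs{B_k}+1},\Q)$ for every $k$. Now $[X_{\sigma_i}]=[\Mbar_{0,n'}]\times[p_i^{(1)}]\times\cdots\times[p_i^{(r)}]$ in $A_*(X_{\bar\sigma},\Q)$ is the image of $[p_i^{(1)}]\times\cdots\times[p_i^{(r)}]$ under the flat pullback along the projection $X_{\bar\sigma}\to\prod_k\Mbar_{0,\abs{B_k}+1}$, so the two classes agree; pushing forward along the closed immersion $X_{\bar\sigma}\into\Mbar_{0,n}$ yields $[X_{\sigma_1}]=[X_{\sigma_2}]$ in $\cA_{d,n}$.

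The argument is largely bookkeeping, and the only non-formal input is $A_0(\Mbar_{0,m},\Q)\isom\Q$, which is classical (it follows, e.g., from Keel's presentation of the Chow ring or from rational connectedness). The step I expect to require the most care is the precise matching of $X_{\sigma_i}$ with a product stratum of $X_{\bar\sigma}$: one must verify that contracting the branches at the $B_k$ never merges the branch-directions at $v$, so that the $v$-factor really is all of $\Mbar_{0,n'}$ and the remaining data is genuinely a point in each $\Mbar_{0,\abs{B_k}+1}$. Once this is in place, the rest is the standard compatibility of flat pullback, exterior products, and proper pushforward with the product decomposition of boundary strata.
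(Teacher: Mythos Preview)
Your proof is correct and follows the same idea the paper sketches: $\sigma_1$ and $\sigma_2$ differ only in the arrangement of their trivalent subtrees, and you make this precise by embedding both strata in the common closed stratum $X_{\bar\sigma}$ and invoking $A_0(\Mbar_{0,m},\Q)\isom\Q$ on each factor. The paper's own proof is a one-line appeal to exactly this observation together with a citation to an external source for the details, so you have essentially written out those details.
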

\begin{proof} This follows immediately from the fact that $\sigma_1$ and $\sigma_2$ differ only in the arrangement of trivalent subtrees. See Lemma 5.2.1 of \cite{RamadasThesis} for a detailed proof.
%There is a stable tree $\sigma_*$ obtained by collapsing all the edges of $\sigma$ (resp. $\sigma'$) except those adjacent to $v_*(\sigma)$ (resp. $v_*(\sigma'))$. So we have $\sigma_*\leadsto\sigma$ and $\sigma_*\leadsto\sigma'$. There is a vertex $v_{*}$ of $\sigma_*$ such that $(v_*,\sigma_*)\leadsto(v_*(\sigma),\sigma)$ and $(v_*,\sigma_*)\leadsto(v_*(\sigma'),\sigma')$; so $\SP_{v_{*}}=\Pi_{*}(\sigma)=\Pi_{*}(\sigma')$.
%
%\noindent Now, both $X_\sigma$ and $X_{\sigma'}$ are subvarieties of the stratum $X_{\sigma_{*}}$. We have
%$$X_{\sigma_{*}}=\Mbar_{0,\Flags_{v_{*}}}\times\prod_{v\in\Verts(\sigma_{*})\setminus \{v_{*}\}} \Mbar_{0,\Flags_v},$$
%and the fundamental classes of both $X_\sigma$ and $X_{\sigma'}$ as subvarieties of the stratum $X_{\sigma_{*}}$ can be obtained as:
%$$[\Mbar_{0,\Flags_{v_{*}}}]\otimes\bigotimes_{v\in\Verts(\sigma_{*})\setminus \{v_{*}\}}[pt].$$
\end{proof} 

For $n\ge 1$ and $d\ge -3$, we set $\bSP_{d,n}$ to be the set of all set partitions of $[n]$ having exactly $d+3$ parts. By Lemma \ref{lem:PartitionDeterminesClass}, if $n\ge 4$ and $d\ge 1$, then there is a well-defined map $\bSP_{d,n}\to \cA_{d,n}$ sending $\Pi$ to $[X_{\sigma}]$, where $\sigma$ is any Type I stable $n$-marked tree such that $\Pi=\Pi_*(\sigma)$ (it is clear that such a $\sigma$ exists). Extending by linearity and composing with the quotient map from $\cA_{d,n}$ to $\cQ_{d,n}$, we obtain a surjective, $S_n$-equivariant, linear map $\Q\bSP_{d,n}\to \cQ_{d,n}$. 

\begin{Def}\label{Def:REL}
  For $n\ge 4$ and $1\le d\le n-4,$ let $\cR_{d,n}$ be the subspace of $\Q\bSP_{d,n}$ generated by elements of the form:
  \begin{align*}
    &\{P_1\cup P_2,P_3,P_4,\ldots,P_{d+4}\}+\{P_1,P_2,P_3\cup
    P_4,\ldots,P_{d+4}\}\\
&\quad-\{P_1\cup
    P_3,P_2,P_4,\ldots,P_{d+4}\}-\{P_1,P_3,P_2\cup P_4,\ldots,P_{d+4}\}, 
  \end{align*}
  where $\{P_1,P_2,P_3,P_4,\ldots,P_{d+4}\}$ is a set partition of $[n]$ with $d+4$ parts.  Note that $\cR_{d,n}$ is $S_n$-invariant. 
 \end{Def}
 
\begin{lem}
The kernel of the surjective linear map $\Q\bSP_{d,n}\to \cQ_{d,n}$ is $\cR_{d,n}$. 
\end{lem}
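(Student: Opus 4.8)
The plan is to show $\cR_{d,n} = \ker(\Q\bSP_{d,n}\to\cQ_{d,n})$ by proving the two inclusions separately. For the inclusion $\cR_{d,n}\subseteq\ker$, I would verify directly that each generator of $\cR_{d,n}$ (as in Definition~\ref{Def:REL}) maps to zero in $\cQ_{d,n}$. Given a set partition $\{P_1,\ldots,P_{d+4}\}$ of $[n]$ with $d+4$ parts, consider a Type I stable tree $\sigma$ realizing it with its valence-$(d+4)$ vertex $v$; the four relevant branches attaching $P_1,P_2,P_3,P_4$ can be rearranged around a single edge. Concretely, blowing up $v$ into an edge separating $\{P_1,P_2\}$ from $\{P_3,\ldots,P_{d+4}\}$ versus separating $\{P_1,P_3\}$ from $\{P_2,P_4,\ldots\}$ produces boundary divisors on the $\Mbar_{0,d+4}$-factor; the fundamental relation in $A_*(\Mbar_{0,d+4})$ coming from Keel's presentation (the ``cross-ratio'' relation identifying sums of boundary divisors through a fixed point) pulls back to exactly the stated combination of Type I strata, which is therefore zero already in $\cA_{d,n}$, hence in $\cQ_{d,n}$.

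For the reverse inclusion $\ker\subseteq\cR_{d,n}$, I would pass to the quotient and show that the induced surjection $\Q\bSP_{d,n}/\cR_{d,n}\onto\cQ_{d,n}$ is injective by a dimension count, or better, by exhibiting a splitting. The cleanest route is likely induction on $n$, using Theorem~\ref{thm:Main}\ref{it:Sequences} (the exact sequence $0\to\cQ_{d,n}\xrightarrow{\pi^*}\cQ_{d+1,n+1}\xrightarrow{\pi_*}\cQ_{d+1,n}\to0$): one builds a parallel exact sequence of the combinatorial objects $\Q\bSP_{d,n}/\cR_{d,n}$ compatible with the maps $\pi^*$ (inserting the new marking $n+1$ into each part in all possible ways, or as a new part) and $\pi_*$ (merging/forgetting), checks that these combinatorial maps descend modulo $\cR$, and applies the five lemma together with the map $\Q\bSP_{d,n}/\cR_{d,n}\to\cQ_{d,n}$. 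The base cases are small $d$ (e.g. $d=1$, where $\cQ_{1,n}$ and the relation space are already understood from the divisor-class picture) and small $n$, which can be checked by hand.

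I expect the main obstacle to be establishing exactness (in particular injectivity of $\pi^*$ and surjectivity onto the right) at the level of the abstract quotient $\Q\bSP_{d,n}/\cR_{d,n}$, independently of its identification with $\cQ_{d,n}$ — that is, genuinely combinatorial exactness of the set-partition complex. One must understand precisely which elements of $\Q\bSP_{d+1,n+1}$ become expressible, modulo $\cR_{d+1,n+1}$, in terms of partitions where $n+1$ sits in its own singleton part (the image of $\pi^*$), and show the relations $\cR$ are exactly rich enough to collapse everything else without collapsing too much. A convenient technical device is to choose, for each set partition, a canonical ordering of its parts and use the relations to move toward a normal form (e.g. repeatedly splitting off elements so that $n+1$'s part is handled last), thereby producing an explicit spanning set for $\Q\bSP_{d,n}/\cR_{d,n}$ whose size matches $\dim\cQ_{d,n}$; comparing with the known character/dimension data (via Getzler or the binomial-coefficient formula referenced in the acknowledgements) then forces the surjection to be an isomorphism. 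Once the normal-form or dimension bound is in place, the containment $\ker\subseteq\cR_{d,n}$ follows formally.
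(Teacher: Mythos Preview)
Your argument for the inclusion $\cR_{d,n}\subseteq\ker$ is essentially right (with the small caveat that the cross-ratio relation on the $\Mbar_{0,d+4}$ factor pulls back to the four Type~I terms \emph{plus} some Type~II terms; the latter vanish in $\cQ_{d,n}$, which is why the relation holds there).

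The reverse inclusion, however, has a genuine circularity problem. You propose to use Theorem~\ref{thm:Main}\ref{it:Sequences} (exactness of the $\cQ$-sequence) and a dimension count against the binomial formula for $\dim\cQ_{d,n}$. But in this paper, the identification $\cQ_{d,n}\cong\Q\bSP_{d,n}/\cR_{d,n}$ is established \emph{first} and then used throughout Section~\ref{sec:Proofs}: Lemma~\ref{lem:exactinmiddle} (exactness in the middle) is proved by working with the lifts $\tilde\pi^*,\tilde\pi_*$ and the explicit relation space $\cR$, and Theorem~\ref{thm:root} (hence Theorem~\ref{thm:Main}) rests on that. Likewise, the binomial-coefficient dimension formula you invoke is a \emph{consequence} of Theorem~\ref{thm:root}, not an independent input; Getzler's algorithm computes $\dim\cA^d_n$, not $\dim\cQ_{d,n}$. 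So neither the exact sequence nor the dimension count is available to you at this point.

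The paper's route is much shorter and avoids all of this: Kontsevich--Manin (Theorem~7.3 of \cite{KontsevichManin1994}) already gives a complete list of linear relations among boundary strata in $\cA_{d,n}$. One then checks that, after passing to the quotient by Type~II strata, each such relation reduces to one of the generators of $\cR_{d,n}$ (the Type~II summands in the WDVV relation simply die), and conversely every generator of $\cR_{d,n}$ arises this way. That is the entire content of the lemma; no induction or dimension comparison is needed.
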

\begin{proof}
This follows immediately from the description of linear relations among boundary strata given in Theorem 7.3 of \cite{KontsevichManin1994}, or see Lemma 5.2.4 of \cite{RamadasThesis} for a detailed proof.
\end{proof}
We conclude that $\cQ_{d,n}$ is naturally isomorphic, as an $S_n$ representation, to $\Q\bSP_{d,n}/\cR_{d,n}$. We use this identification throughout the paper.

\subsection{Kappa classes and the intersection pairing}

For $d=0,\ldots, n-3$, $\Mbar_{0,n}$ carries a codimension $d$ kappa class $\kappa_d\in \cA^d_n$, see \cite{ArbarelloCornalba1996, ArbarelloCornalba1998} for the definition and properties.

\begin{lemma}\label{lem:pairkappawithstrata}
Let $n\ge4$ and $1\le d\le n-3$. Suppose $X_\sigma$ is a dimension $d$ boundary stratum on $\Mbar_{0,n}$. Then 
\begin{align}
[X_\sigma]\cdot\kappa_d=
\begin{cases}
0&\text{$X_\sigma$ is Type II}\\
1&\text{$X_\sigma$ is Type I}
\end{cases}
\end{align}
\end{lemma}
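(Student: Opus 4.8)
## Proof Proposal for Lemma \ref{lem:pairkappawithstrata}

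The plan is to reduce the computation of $[X_\sigma]\cdot\kappa_d$ to a known intersection-theoretic fact about $\kappa$ classes restricted to boundary strata, using the product decomposition $X_\sigma\isom\prod_v\Mbar_{0,\val(v)}$ and the standard formula for the restriction of $\kappa_d$ to a boundary divisor (and iteratively, to a deeper stratum). First I would recall the pullback/restriction behavior of $\kappa$ classes: if $j_\sigma:X_\sigma\into\Mbar_{0,n}$ is the inclusion, then $j_\sigma^*\kappa_d$ can be expressed via the formula of Arbarello--Cornalba \cite{ArbarelloCornalba1996, ArbarelloCornalba1998} for $\kappa$ classes on products of moduli spaces. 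Concretely, under $X_\sigma\isom\prod_v\Mbar_{0,\val(v)}$, the class $j_\sigma^*\kappa_d$ is a sum, over ways of distributing the total codimension $d$ among the vertices, of external products of $\kappa$ classes on the factors (with correction terms involving $\psi$ classes at the nodes, which I will need to track carefully). Since $[X_\sigma]\cdot\kappa_d = \deg\big(j_\sigma^*\kappa_d\big)$ and $X_\sigma$ is $d$-dimensional, only the top-degree part of $j_\sigma^*\kappa_d$ on $X_\sigma$ contributes.

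The key dimension bookkeeping is this: write $\dim X_\sigma = \sum_v(\val(v)-3) = d$, so the factors $\Mbar_{0,\val(v)}$ with $\val(v)=3$ are points and contribute nothing, while the factors with $\val(v)\ge4$ have positive dimension $\val(v)-3$ summing to $d$. If $\sigma$ is Type I with unique high-valence vertex $v_0$, then $X_\sigma\isom\Mbar_{0,\val(v_0)}$ with $\val(v_0)=d+3$, and $j_\sigma^*\kappa_d$ reduces (the $\psi$-correction terms drop out because the other factors are points, so no nodes carry positive-dimensional neighborhoods) to $\kappa_d$ on $\Mbar_{0,d+3}$, whose degree is $\int_{\Mbar_{0,d+3}}\kappa_d = 1$. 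This last evaluation is classical: $\kappa_d = \pi_*(\psi_{n+1}^{d+1})$ on $\Mbar_{0,n+1}\to\Mbar_{0,n}$, and $\int_{\Mbar_{0,d+3}}\kappa_d = \int_{\Mbar_{0,d+4}}\psi_{d+4}^{d+1} = 1$ by the string/dilaton computation of $\psi$-intersection numbers on genus-$0$ moduli. If $\sigma$ is Type II, there are at least two vertices $v_1,v_2$ of valence $\ge4$; in the restriction formula, the only terms of top degree $d$ on $X_\sigma$ must put \emph{all} of the codimension on a \emph{single} factor $\Mbar_{0,\val(v_i)}$, but that factor has dimension $\val(v_i)-3 < d$, so $\kappa_{d}$ (or $\kappa_{\ge\val(v_i)-2}$) vanishes there for degree reasons — hence $j_\sigma^*\kappa_d=0$ in top degree and the intersection number is $0$.

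The main obstacle I anticipate is pinning down the precise restriction formula for $\kappa_d$ to a boundary stratum of $\Mbar_{0,n}$, including the $\psi$-class corrections at the nodes, and verifying that in the Type II case none of the mixed terms (distributing codimension across two or more positive-dimensional factors, possibly absorbing some into node $\psi$ classes) can survive in top degree. I expect this to come down to the identity that, on $\Mbar_{0,m}\times\Mbar_{0,m'}$, the pullback of $\kappa_d$ under the gluing map equals $\kappa_d\otimes 1 + 1\otimes\kappa_d + (\text{terms with a }\psi\text{ at the node})$, and an induction on the number of edges of $\sigma$ collapsing one edge at a time; alternatively one can cite the comparison $\kappa_d^{\Mbar_{0,n}} = \pi_{T,*}$ of $\psi$-powers and use the projection formula together with the known pairing of $\psi$ classes against boundary strata. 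I would present the Type I case in full (it is the substantive positive assertion) and handle the Type II vanishing by the degree count above, citing \cite{ArbarelloCornalba1996} for the restriction formula rather than reproving it.
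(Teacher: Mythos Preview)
Your approach is essentially the same as the paper's: reduce $[X_\sigma]\cdot\kappa_d$ to $\deg(j_\sigma^*\kappa_d)$ on the product $X_\sigma\cong\prod_v\Mbar_{0,\val(v)}$, invoke the Arbarello--Cornalba restriction formula for $\kappa$ classes, and evaluate. The paper simply cites Equation~1.8 of \cite{ArbarelloCornalba1996} for the restriction and Lemma~1.1(12) of \cite{CavalieriYang2011} for the top-degree evaluation, calling it a standard computation.

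The one correction worth making is that the restriction formula is simpler than you anticipate. For the Arbarello--Cornalba $\kappa$ classes one has
\[
  j_\sigma^*\kappa_d \;=\; \sum_{v\in\Verts(\sigma)} p_v^*\,\kappa_d,
\]
i.e.\ $\kappa_d$ restricts \emph{additively} over the vertices, with no $\psi$-corrections at the nodes and no ``distribution of the codimension $d$ among the factors.'' (This is precisely the content of Equation~1.8 in \cite{ArbarelloCornalba1996}.) With this in hand, both cases are immediate. In Type~II, every vertex $v$ has $\val(v)-3<d$, so each summand $p_v^*\kappa_d$ is a codimension-$d$ class pulled back from a space of dimension $<d$, hence vanishes; there are no mixed terms to worry about. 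In Type~I, all summands vanish except the one at the unique vertex $v_0$ of valence $d+3$, giving $\int_{\Mbar_{0,d+3}}\kappa_d=\int_{\Mbar_{0,d+4}}\psi_{d+4}^{\,d+1}=1$, exactly as you wrote. So your outline is correct; just replace the tentative description of $j_\sigma^*\kappa_d$ with the additive formula and the anticipated obstacle disappears.
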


\begin{proof}
This follows by a standard computation from Equation 1.8 of \cite{ArbarelloCornalba1996} together with Lemma 1.1 (12) of \cite{CavalieriYang2011}.
\end{proof}

\begin{Def}
For $T\subseteq [n]$, set $\kappa_d^T$ to be the pullback, to $\Mbar_{0,n}$, of the codimension $d$ kappa class on $\Mbar_{0,T}$, via the natural forgetful map $\pi_T:\Mbar_{0,n}\to \Mbar_{0,T}$. 
\end{Def}

\begin{Def}
Set $\mathcal{K}^d_n$ to be the subspace of $\mathcal{A}^d_n$ spanned by the classes $\{\kappa_T\thickspace|\thickspace T\subseteq [n]\}$. Note that $\mathcal{K}^d_n$ is $S_n$-invariant.
\end{Def}

\begin{lemma}\label{lem:pairkappapullbackswithstrata}
Let $n\ge4$ and $1\le d\le n-3$. Suppose $X_\sigma$ is a dimension $d$ boundary stratum on $\Mbar_{0,n}$, and $T\subseteq [n]$. Then 
\begin{align}
[X_\sigma]\cdot\kappa_d^T=
\begin{cases}
0&\text{$X_\sigma$ is Type II}\\
1&\text{$X_\sigma$ is Type I and $\forall P\in\Pi_*(\sigma), \thickspace P\cap T\ne\emptyset$}\\
0&\text{$\sigma$ is Type I and $\exists P\in\Pi_*(\sigma)$ s.t. $P\cap T=\emptyset$}
\end{cases}
\end{align}
\end{lemma}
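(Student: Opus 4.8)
The plan is to reduce the statement about $\kappa_d^T$ to the already-established Lemma \ref{lem:pairkappawithstrata} about $\kappa_d$ itself, by analyzing how the forgetful map $\pi_T$ interacts with the boundary stratum $X_\sigma$. The projection formula gives $[X_\sigma]\cdot\kappa_d^T=[X_\sigma]\cdot\pi_T^*\kappa_d=(\pi_T)_*[X_\sigma]\cdot\kappa_d$, where now the right-hand intersection takes place in $\Mbar_{0,T}$. So everything hinges on computing $(\pi_T)_*[X_\sigma]\in\cA_{d,T}$.

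First I would dispose of the Type II case: if $X_\sigma$ is Type II, then either $(\pi_T)_*[X_\sigma]=0$ for dimension reasons (if $\pi_T$ contracts $X_\sigma$ onto something of dimension $<d$), or $(\pi_T)_*[X_\sigma]$ is a positive multiple of a boundary stratum in $\Mbar_{0,T}$ that is itself Type II or lower-dimensional — and in the latter sub-case the image stratum still pairs to zero with $\kappa_d$ by Lemma \ref{lem:pairkappawithstrata}. (A cleaner way: it already follows from Lemma \ref{lem:pairkappapullbackswithstrata}'s Type II line being a special case of Lemma \ref{lem:pairkappawithstrata} applied after noting $\kappa_d^T$ is a tautological class, but I would prefer the explicit pushforward argument for uniformity.) For the Type I case, let $v$ be the unique vertex of $\sigma$ with valence $\ge 4$, so that $\Pi_*(\sigma)$ is the induced partition of $[n]$ with $d+3$ parts and $X_\sigma\cong\Mbar_{0,\val(v)}$. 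The key combinatorial observation is: $\pi_T$ restricted to $X_\sigma$ is, up to the product decomposition, the forgetful map $\Mbar_{0,\val(v)}\to\Mbar_{0,T'}$ where $T'$ is obtained by recording which parts of $\Pi_*(\sigma)$ meet $T$. Two cases arise. If every part $P\in\Pi_*(\sigma)$ satisfies $P\cap T\ne\emptyset$, then $T$ ``sees'' all $d+3$ parts, the induced tree $\pi_T(\sigma)$ on $\Mbar_{0,T}$ is again Type I with dimension exactly $d$, and $\pi_T|_{X_\sigma}$ is generically finite of degree one onto $X_{\pi_T(\sigma)}$; hence $(\pi_T)_*[X_\sigma]=[X_{\pi_T(\sigma)}]$, a Type I stratum, which pairs with $\kappa_d$ to give $1$. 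If instead some part $P$ has $P\cap T=\emptyset$, then under $\pi_T$ the entire branch of $\sigma$ beyond that part gets forgotten, so the image $\pi_T(X_\sigma)$ has dimension strictly less than $d$ and therefore $(\pi_T)_*[X_\sigma]=0$ in $\cA_{d,T}$, giving the pairing $0$.

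The main obstacle I anticipate is making the ``$\pi_T|_{X_\sigma}$ has degree one onto its image'' claim fully rigorous, i.e. tracking the behavior of $\pi_T$ on the product-of-moduli-spaces decomposition $X_\sigma\cong\prod_v\Mbar_{0,\val(v)}$ and verifying both the dimension count and the birationality of the restricted map. Concretely one wants to say that forgetting the points of $[n]\setminus T$ collapses each trivalent (or, more generally, each non-$v$) subtree to a point while not collapsing the $v$-factor precisely when $T$ meets all parts of $\Pi_*(\sigma)$; when $T$ misses a part, an extra factor gets collapsed or an unstable component appears and must be contracted, dropping the dimension. This is essentially bookkeeping on stable trees and is the content of how forgetful maps act on boundary strata (as in \cite{Keel1992, KontsevichManin1994}), but stating it carefully is where the real work lies. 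Everything else — the projection formula and the final appeal to Lemma \ref{lem:pairkappawithstrata} — is immediate.
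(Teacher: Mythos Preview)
Your approach is essentially identical to the paper's: apply the projection formula to reduce to $(\pi_T)_*[X_\sigma]\cdot\kappa_d$, determine $(\pi_T)_*[X_\sigma]$ case by case (zero or Type~II in the Type~II case; zero if some part of $\Pi_*(\sigma)$ misses $T$; a single Type~I stratum if every part meets $T$), and then invoke Lemma~\ref{lem:pairkappawithstrata}. The paper handles the pushforward analysis by citing \cite{Ramadas2015} rather than sketching the stable-tree bookkeeping you describe, but the argument is the same and your anticipated ``degree one'' verification is exactly what is needed.
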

\begin{proof}
By the projection formula, $[X_\sigma]\cdot\kappa_d^T=(\pi_T)_*([X_\sigma])\cdot \kappa_d$. By \cite{Ramadas2015}, if $X_{\sigma}$ is Type II, then $\pi_*([X_{\sigma}])$ is either zero, or the fundamental class of a Type II boundary stratum of $\Mbar_{0,T}$. If $X_{\sigma}$ is Type I, then if $\exists P\in\Pi_*(\sigma)$ s.t. $P\cap T=\emptyset$, then $\pi_*([X_{\sigma}])=0$, while if $\forall P\in\Pi_*(\sigma), \thickspace P\cap T\ne\emptyset$, then $\pi_*([X_{\sigma}])$ is the fundamental class of a Type I boundary stratum of $\Mbar_{0,T}$. Applying  Lemma \ref{lem:pairkappawithstrata}, we obtain the desired result.
\end{proof}

\begin{cor}\label{cor:kappaandQduals}
\begin{enumerate}
\item The subspace $\cK_{n}^d\subseteq\cA_{n}^d$ is orthogonal, with respect to the intersection pairing, to $\cV_{d,n}\subseteq\cA_{d,n}$, i.e. we have $\cK_n^d\subseteq \cV_{d,n}^{\perp}$.\label{it:KappaPerp}
\item The intersection pairing on $\Mbar_{0,n}$ descends to a pairing $\cQ_{d,n}\times\cK_{n}^d\to\Q$. \label{it:KappaTypeIPairing}
\end{enumerate}
\end{cor}
We will eventually show that $\cK_n^d=\cV_{d,n}^{\perp}$, which implies that $\cK_n^d=\cQ_{d,n}^{\vee}$. 
\begin{Def}\label{Def:Pairing}
 Define a pairing
   $ \langle\cdot,\cdot\rangle:\{\text{set partitions
    of $[n]$}\}\times\{\text{subsets of $[n]$}\}\to\Z$.
 For a set partition $\Pi$ and subset $T$, set
  \begin{align*}
    \langle\Pi,T\rangle=
    \begin{cases}
      1&\text{$\forall P\in\Pi, \quad P\cap T\ne\emptyset$}\\
      0&\exists P\in\Pi \text{ s.t. }P\cap T=\emptyset
    \end{cases}
  \end{align*}
 
\end{Def}
 From Lemma \ref{lem:pairkappapullbackswithstrata} we obtain a purely combinatorial description of the pairing between $\cQ_{d,n}$ and $\cK^d_n$ obtained in Corollary \ref{cor:kappaandQduals}:
 
 \begin{lem}\label{lem:combinatorialpairing}
  For any Type I $d$-dimensional boundary stratum $X_\sigma$, and for any subset $T\subseteq[n]$, we have that $[X_\sigma]\cdot \kappa^T_d=\langle\Pi_*(\sigma),T\rangle$. 
  \end{lem}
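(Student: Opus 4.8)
The plan is to unwind both sides of the claimed identity directly from the definitions and the already-established lemmas, with essentially no new work required. The statement is that for a Type~I $d$-dimensional boundary stratum $X_\sigma$ and a subset $T\subseteq[n]$, we have $[X_\sigma]\cdot\kappa^T_d=\langle\Pi_*(\sigma),T\rangle$, where the left side is the intersection pairing on $\Mbar_{0,n}$ and the right side is the combinatorial pairing of Definition~\ref{Def:Pairing}.

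First I would invoke Lemma~\ref{lem:pairkappapullbackswithstrata}, which computes $[X_\sigma]\cdot\kappa^T_d$ explicitly in three cases. Since $X_\sigma$ is assumed Type~I, only the second and third cases of that lemma are relevant: the value is $1$ if every part $P\in\Pi_*(\sigma)$ satisfies $P\cap T\neq\emptyset$, and $0$ if there exists a part $P\in\Pi_*(\sigma)$ with $P\cap T=\emptyset$. Next I would observe that this is verbatim the definition of $\langle\Pi_*(\sigma),T\rangle$ given in Definition~\ref{Def:Pairing}: the pairing $\langle\Pi,T\rangle$ equals $1$ precisely when $\forall P\in\Pi,\ P\cap T\neq\emptyset$, and equals $0$ precisely when $\exists P\in\Pi$ with $P\cap T=\emptyset$. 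Matching the two case analyses term by term with $\Pi=\Pi_*(\sigma)$ yields the equality. I should also note that, since $X_\sigma$ is Type~I, the partition $\Pi_*(\sigma)$ is well-defined (intrinsic to $\sigma$) as recorded after Definition~\ref{Def:Pairing}'s preceding definitions, so the right side makes sense.

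Honestly there is no real obstacle here: the content of the lemma is entirely contained in Lemma~\ref{lem:pairkappapullbackswithstrata} together with the choice of notation in Definition~\ref{Def:Pairing}, and the proof is a one-line citation. The only thing to be slightly careful about is confirming that the case ``$X_\sigma$ Type~II'' in Lemma~\ref{lem:pairkappapullbackswithstrata} is simply not in play under the hypothesis that $X_\sigma$ is Type~I, so that the two surviving cases exactly reproduce the two cases defining $\langle\cdot,\cdot\rangle$. Thus the proof reads: ``By Lemma~\ref{lem:pairkappapullbackswithstrata}, $[X_\sigma]\cdot\kappa^T_d$ is $1$ if every part of $\Pi_*(\sigma)$ meets $T$ and $0$ otherwise; by Definition~\ref{Def:Pairing} this is exactly $\langle\Pi_*(\sigma),T\rangle$.''
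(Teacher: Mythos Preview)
Your proposal is correct and matches the paper's approach exactly: the paper presents this lemma as an immediate consequence of Lemma~\ref{lem:pairkappapullbackswithstrata} together with Definition~\ref{Def:Pairing}, with no additional argument given. Your one-line summary is precisely what the paper intends.
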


\subsection{Pushing forward and pulling back via forgetful maps}

The forgetful morphism $\pi:\Mbar_{0,n+1}\to\Mbar_{0,n}$ induces pushforward maps $\pi_*:\cA_{d,n+1}\to\cA_{d,n}$ and pullback maps $\pi^*:\cA_{d,n}\to\cA_{d+1,n+1}$. By \cite{Ramadas2015}, $\pi_*(\cV_{d,n+1})\subseteq \cV_{d,n}$. Also, if $X_\sigma$ is a Type II boundary stratum, then $\pi^*([X_{\sigma}])$ is a sum of fundamental classes of Type II boundary strata of $\Mbar_{0,n+1}$. This implies that $\pi^*(\cV_{d,n})\subseteq \cV_{d+1,n+1}$. Thus there are induced pushforward maps $\pi_*:\cQ_{d,n+1}\to\cQ_{d,n}$ and pullback maps $\pi^*:\cQ_{d,n}\to\cQ_{d+1,n+1}$. 

\begin{lem}
\begin{enumerate}
\item The pushforward $\pi_*:\cQ_{d,n+1}\to\cQ_{d,n}$ lifts to $\tilde{\pi}_*:\Q\bSP_{d,n+1}\to\Q\bSP_{d,n}$ where 
\begin{align*}
\tilde{\pi}_*(\{P_1,\ldots,P_{d+3}\})=&
                          \begin{cases}
                            0&\exists i \text{ s.t. }P_i=\{n+1\}\\
                            \{P_1\setminus\{n+1\},\ldots,P_{d+3}\setminus\{n+1\}\}&\text{otherwise}
                          \end{cases}
\end{align*}
\item The pullback $\pi^*:\cQ_{d,n}\to\cQ_{d+1,n+1}$ lifts to $\tilde{\pi}^*:\Q\bSP_{d,n}\to\Q\bSP_{d+1,n+1}$ where $\tilde{\pi}^{*}(\Pi)=\Pi\cup\{\{n+1\}\}$, for $\Pi\in\bSP_{d,n}$
\end{enumerate}
\end{lem}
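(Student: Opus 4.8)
The plan is to verify each lifting formula by a direct computation on generators, checking two things for each: that the proposed map $\tilde\pi_*$ (resp. $\tilde\pi^*$) indeed descends to the quotients $\cQ_{\bullet,\bullet} = \Q\bSP_{\bullet,\bullet}/\cR_{\bullet,\bullet}$, i.e. carries $\cR$ into $\cR$, and that the descended map agrees with the geometrically defined $\pi_*$ (resp. $\pi^*$) on $\cQ$. For part (2) this is almost immediate: the map $\Pi \mapsto \Pi \cup \{\{n+1\}\}$ on set partitions obviously sends a $(d+3)$-part partition of $[n]$ to a $(d+4)$-part partition of $[n+1]$, hence lands in $\bSP_{d+1,n+1}$; it carries a generator of $\cR_{d,n}$ (a signed sum of four partitions with a common refinement into $d+4$ parts) to the analogous generator of $\cR_{d+1,n+1}$ with the extra singleton part $\{n+1\}$ appended, so $\tilde\pi^*(\cR_{d,n}) \subseteq \cR_{d+1,n+1}$ and the descent exists. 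To identify the descended map with $\pi^*$, I would take a Type I stable $n$-marked tree $\sigma$ with $\Pi_*(\sigma) = \Pi$, note that the forgetful-map pullback $\pi^*[X_\sigma]$ is represented (modulo Type II strata, which vanish in $\cQ_{d+1,n+1}$) by the boundary stratum obtained by attaching the leg $n+1$ to the valence-$\ge 4$ vertex of $\sigma$, and observe that this new stratum has associated partition $\Pi \cup \{\{n+1\}\}$ — because $n+1$ sits alone on its own side only in the limiting configuration, i.e. it forms its own part. (Concretely: $\pi$ is the universal curve, $\pi^*\kappa$-type classes pull back correctly, and the pullback of a Type I stratum in $A_\bullet$ is computed by Keel/Kontsevich--Manin as a sum of strata, only one of which is Type I.)

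For part (1), the content is the verification that $\tilde\pi_*$ respects $\cR$. Apply $\tilde\pi_*$ to a generator of $\cR_{d,n+1}$ coming from a $(d+4)$-part partition $\{P_1,\dots,P_{d+4}\}$ of $[n+1]$: there are two cases. If the singleton $\{n+1\}$ is \emph{not} one of the parts $P_1,\dots,P_{d+4}$, then $n+1 \in P_k$ with $|P_k|\ge 2$; deleting $n+1$ from every part of all four terms produces exactly a generator of $\cR_{d,n}$ (or the zero element, if removing $n+1$ creates an empty part — but since $|P_k|\ge 2$ this never happens, and the four union-parts $P_1\cup P_2$ etc. are also all nonempty after deletion). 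If the singleton $\{n+1\}$ \emph{is} one of the four parts, say $P_j = \{n+1\}$: when $j \ge 5$ (i.e. $P_j$ is one of the "spectator" parts $P_4,\dots,P_{d+4}$), each of the four terms still contains the part $\{n+1\}$, so $\tilde\pi_*$ kills all four and we get $0 \in \cR_{d,n}$; when $j \in \{1,2,3,4\}$, one must check that the four images still sum to an element of $\cR_{d,n}$ — and here the symmetry of the generator in the pairs $(P_1,P_2)$ and $(P_3,P_4)$, together with the minus signs, makes the surviving terms cancel in pairs or reassemble into a $\cR_{d,n}$-generator for the partition obtained by deleting the singleton. Working through the four sub-cases $j=1,2,3,4$ using this symmetry is the heart of the argument. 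Having shown $\tilde\pi_*(\cR_{d,n+1}) \subseteq \cR_{d,n}$, the descended map is identified with $\pi_*$ on Type I strata by the same mechanism used in the proof of Lemma~\ref{lem:pairkappapullbackswithstrata}: if some part of $\Pi_*(\sigma)$ is the singleton $\{n+1\}$, then $\pi_*[X_\sigma] = 0$ by \cite{Ramadas2015}; otherwise $\pi_*[X_\sigma]$ is the Type I stratum whose partition is $\Pi_*(\sigma)$ with $n+1$ deleted from its part, which is precisely $\tilde\pi_*(\Pi_*(\sigma))$.

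The main obstacle is the bookkeeping in part (1), case "$\{n+1\}$ is one of the four active parts": one has to be careful that, e.g., when $P_1 = \{n+1\}$, the parts $P_1 \cup P_2$ and $P_1 \cup P_3$ become just $P_2$ and $P_3$ after applying $\tilde\pi_*$, so the image of the generator reads $\{P_2, P_3, \dots\} - \{P_3, P_2,\dots\} + (\text{remaining two terms, each still containing } \{n+1\}, \text{ hence zero}) = 0$, and similarly for the other sub-cases — so all of them land in $\cR_{d,n}$, but tracking exactly which terms survive requires patience rather than insight. Everything else — that the formulas land in the right $\bSP$ (a matter of counting parts and deleting at most one element), $S_n$- or $S_{n+1}$-equivariance (manifest from the formulas), and surjectivity (inherited from surjectivity of $\Q\bSP \onto \cQ$) — is routine. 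A fully detailed treatment appears in \cite{RamadasThesis}; I will give the computation for the generators and leave the routine checks to the reader.
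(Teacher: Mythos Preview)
Your proposal is correct, and its geometric core---that $\pi^*[X_\sigma]$ equals, modulo Type~II strata, the class of the stratum with partition $\Pi_*(\sigma)\cup\{\{n+1\}\}$, and that $\pi_*[X_\sigma]$ is either zero or the Type~I stratum with partition obtained by deleting $n+1$ from its part---is exactly the paper's argument.

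One inefficiency worth noting: your separate verification that $\tilde\pi_*(\cR_{d,n+1})\subseteq\cR_{d,n}$ (the case analysis you call ``the heart of the argument'') is not needed. The lemma asks only that $\tilde\pi_*$ be a \emph{lift} of the already-defined map $\pi_*\colon\cQ_{d,n+1}\to\cQ_{d,n}$, i.e.\ that the square
\[
\begin{tikzcd}
\Q\bSP_{d,n+1}\arrow[r,"\tilde\pi_*"]\arrow[d,"q"] & \Q\bSP_{d,n}\arrow[d,"q"]\\
\cQ_{d,n+1}\arrow[r,"\pi_*"] & \cQ_{d,n}
\end{tikzcd}
\]
commutes. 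Since commutativity is a linear condition, it suffices to check it on the spanning set $\bSP_{d,n+1}$, which is precisely your step~(ii). Once that holds, $\tilde\pi_*(\cR_{d,n+1})\subseteq\cR_{d,n}$ is automatic, because $\cR$ is the kernel of $q$. The paper's sketch accordingly omits your step~(i) entirely and records only the geometric observation on generators. Your case analysis is correct and harmless, just redundant.
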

\begin{proof}[Sketch of proof]
This lemma follows from the observation that if $X_\sigma$ is a Type I $d$-dimensional boundary stratum whose dual tree $\sigma$ has exactly one vertex with valence at least four, then 
\begin{align}\label{eq:pullbackstratum}
\pi^{*}([X_\sigma])=[X_{\sigma'}]+(\text{sum of classes of Type II boundary strata}),
\end{align}
 where $\sigma'$ is a Type I stable $(n+1)$-marked tree with the property that $\Pi_*(\sigma')=\Pi_*(\sigma)\cup\{\{n+1\}\}$.
\end{proof}

The pushforward maps $\pi_*:\cA_{d,n+1}\to\cA_{d,n}$ and $\pi_*:\cQ_{d,n+1}\to\cQ_{d,n}$ are easily seen to be surjective. Since $\pi$ has positive relative dimension (equal to one), $\pi_*\circ\pi^*=0$ on $\cA_{*,*}$, thus also on $\cQ_{*,*}$.

\begin{lem}\label{lem:exactinmiddle}
  For $n\ge 4$ and $k\ge 1$, the complex $\cQ_{d,n}\xrightarrow{\pi^*}{}\cQ_{d+1,n+1}\xrightarrow{\pi_*}{}\cQ_{d+1,n}$ is exact.
\end{lem}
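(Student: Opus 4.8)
Since $\pi_*\pi^*=0$ (noted above), the only thing to prove is the inclusion $\Ker\bigl(\pi_*\colon\cQ_{d+1,n+1}\to\cQ_{d+1,n}\bigr)\subseteq\Im\bigl(\pi^*\colon\cQ_{d,n}\to\cQ_{d+1,n+1}\bigr)$, and I would carry this out in the combinatorial model $\cQ_{d+1,n+1}\cong\Q\bSP_{d+1,n+1}/\cR_{d+1,n+1}$. Partition the generating set as $\bSP_{d+1,n+1}=\mathbf A\sqcup\mathbf B$, where $\mathbf A$ consists of the $(d+4)$-block partitions of $[n+1]$ having $\{n+1\}$ as a block and $\mathbf B$ of those in which $n+1$ lies in a block of size $\ge 2$. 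From the formulas for the lifts one reads off: $\tilde{\pi}^{*}$ is injective with image $\Q\mathbf A$, while $\tilde{\pi}_{*}$ annihilates $\Q\mathbf A$ and maps $\Q\mathbf B$ onto $\Q\bSP_{d+1,n}$ by deleting $n+1$. So it is enough to show that every $\bar x\in\cQ_{d+1,n+1}$ with $\pi_*(\bar x)=0$ has a representative modulo $\cR_{d+1,n+1}$ lying in $\Q\mathbf A$.

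The first step is to reduce to a lift of $\bar x$ that is killed by $\tilde{\pi}_{*}$ exactly. For this, note that $\tilde{\pi}_{*}\colon\cR_{d+1,n+1}\to\cR_{d+1,n}$ is surjective: a defining generator of $\cR_{d+1,n}$ is built from a $(d+5)$-block partition $\{P_1,\dots,P_{d+5}\}$ of $[n]$ and involves only $P_1,P_2,P_3,P_4$, so enlarging the spectator block $P_5$ to $P_5\cup\{n+1\}$ produces a $(d+5)$-block partition of $[n+1]$ whose associated generator of $\cR_{d+1,n+1}$ is carried onto the original one by $\tilde{\pi}_{*}$ (in each of the four terms $n+1$ sits in a block of size $\ge 2$, so $\tilde{\pi}_{*}$ simply deletes it). Hence, if $\tilde x$ is any lift of $\bar x$ then $\tilde{\pi}_{*}(\tilde x)\in\cR_{d+1,n}$, and after subtracting a preimage in $\cR_{d+1,n+1}$ we may assume $\tilde{\pi}_{*}(\tilde x)=0$ in $\Q\bSP_{d+1,n}$.

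Now write $\tilde x=\tilde x_{\mathbf A}+\tilde x_{\mathbf B}$ along $\Q\bSP_{d+1,n+1}=\Q\mathbf A\oplus\Q\mathbf B$; since $\tilde{\pi}_{*}$ kills $\Q\mathbf A$ we get $\tilde{\pi}_{*}(\tilde x_{\mathbf B})=0$. The map $\tilde{\pi}_{*}|_{\Q\mathbf B}$ is constant on the fibers of ``delete $n+1$'': over a $(d+4)$-block partition $\Lambda=\{Q_1,\dots,Q_{d+4}\}$ of $[n]$ it sends each basis vector $\Pi_{\Lambda}^{(a)}$ — meaning $\Lambda$ with $n+1$ adjoined to the block $Q_a$ — to $\Lambda$, so its kernel is spanned by the ``transfer'' differences $\Pi_{\Lambda}^{(a)}-\Pi_{\Lambda}^{(b)}$. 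The crux is that each such difference lies in $\Q\mathbf A$ modulo $\cR_{d+1,n+1}$: choosing a third block $Q_c$ of $\Lambda$ (possible since $d\ge 1$ forces $\Lambda$ to have $d+4\ge 5$ blocks) and forming the defining generator of $\cR_{d+1,n+1}$ attached to the $(d+5)$-block partition of $[n+1]$ with $P_1=\{n+1\}$, $P_2=Q_a$, $P_3=Q_b$, $P_4=Q_c$ and $P_5,\dots,P_{d+5}$ the other blocks of $\Lambda$, the four terms of this generator are $\Pi_{\Lambda}^{(a)}$, the term $\{P_1,P_2,P_3\cup P_4,\dots\}\in\mathbf A$, $\Pi_{\Lambda}^{(b)}$, and the term $\{P_1,P_3,P_2\cup P_4,\dots\}\in\mathbf A$; hence in $\cQ_{d+1,n+1}$ we get $\Pi_{\Lambda}^{(a)}-\Pi_{\Lambda}^{(b)}\equiv(\text{a }\Q\mathbf A\text{-class})$. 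Therefore $\tilde x_{\mathbf B}$, and so $\tilde x$, is congruent modulo $\cR_{d+1,n+1}$ to an element of $\Q\mathbf A=\tilde{\pi}^{*}(\Q\bSP_{d,n})$, which gives $\bar x\in\Im(\pi^*)$.

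The one nonroutine ingredient is the crux identity in the last paragraph — recognizing the transfer of the marked point $n+1$ between two blocks of a partition as (a $\pi^*$-pullback) plus (a Kontsevich--Manin relation). Everything else is bookkeeping; the only constraint actually used is that $\Lambda$ has at least three blocks, which holds throughout the stated range, and the degenerate case $\mathbf B=\emptyset$ (in which $\tilde x\in\Q\mathbf A$ already) is immediate.
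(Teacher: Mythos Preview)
Your proof is correct and follows essentially the same approach as the paper: both lift to $\Q\bSP$, identify $\Ker(\tilde\pi_*)$ as $\Im(\tilde\pi^*)$ plus the span of the ``transfer'' differences, and then use the same Kontsevich--Manin relation (with the singleton $\{n+1\}$ playing the role of one of the four active parts) to show each transfer difference lies in $\Im(\tilde\pi^*)+\cR_{d+1,n+1}$. Your write-up is in fact slightly more careful than the paper's terse version, since you explicitly justify that one may choose a lift of $\bar x$ lying in $\Ker(\tilde\pi_*)$ by checking that $\tilde\pi_*\colon\cR_{d+1,n+1}\to\cR_{d+1,n}$ is surjective --- a point the paper leaves implicit when passing from $\Ker(\tilde\pi_*)\subseteq\Im(\tilde\pi^*)+\cR_{d+1,n+1}$ to $\Ker(\pi_*)=\Im(\pi^*)$.
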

\begin{proof} We use the lifts of $\pi^*$ and
  $\pi_*$ to $\tilde{\pi}^*:\Q\bSP_{d,n}\to \Q\bSP_{d+1,n+1}$ and
  $\tilde{\pi}_*:\Q\bSP_{d+1,n+1}\to \Q\bSP_{d+1,n}$ respectively. We have:
  
  $$\Ker(\tilde{\pi}_*)=\Im(\tilde{\pi}^*)
  +\Span(\{P_1\cup\{n+1\},P_2,P_3,P_4,\ldots\}-\{P_1,P_2\cup\{n+1\},P_3,P_4,\ldots\}\}),$$ 
  
and: 

\begin{align*}
  &\{P_1\cup\{n+1\},P_2,P_3,P_4,\ldots\}-\{P_1,P_2\cup\{n+1\},P_3,P_4,\ldots\}\\
  =&\left(\{P_1\cup\{n+1\},P_2,P_3,P_4,\ldots\} +\{P_1,\{n+1\},P_2\cup
    P_3,P_4,\ldots\}\right.\\&\quad\quad\left.-\{P_1,P_3,P_2\cup\{n+1\},P_4,\ldots\}-\{P_1\cup
    P_3,P_2,\{n+1\},P_4,\ldots\}\right)\\
  &\quad\quad-\left(\{P_1,\{n+1\},P_2\cup
    P_3,P_4,\ldots\}-\{P_1\cup P_3,P_2,\{n+1\},P_4,\ldots\}\right)\in \cR_{k+1,n+1}+\Im(\tilde{\pi}^*)
\end{align*}
This implies that  $\Ker(\tilde{\pi}_*)=\Im(\tilde{\pi}^*)+\cR_{k+1,n+1}$, which in turn implies that $\Ker(\pi_*)=\Im(\pi^*)$.
\end{proof}

%  We have:
%  \begin{center}
%    \begin{tikzcd}[row sep=scriptsize, column sep=scriptsize]
%      \cR_{d,n}\arrow[r,"\tilde{\pi}^*"]\arrow[d,hook]&\cR_{d+1,n+1}\arrow[r,"\tilde{\pi}_*"]\arrow[d,hook]&\cR_{d+1,n}\arrow[d,hook]\\
%      \Q\bSP_{d,n}\arrow[r,"\tilde{\pi}^*"]\arrow[d]&\Q\bSP_{d+1,n+1}\arrow[r,"\tilde{\pi}_*"]\arrow[d]&\Q\bSP_{d+1,n}\arrow[d]\\
%      \cQ{}_{d,n}\arrow[r,"\pi^*"]&\cQ{}_{d+1,n+1}\arrow[r,"\pi_*"]&\cQ{}_{d+1,n}
%    \end{tikzcd}
%  \end{center}

%\begin{rem}
%We will eventually give a (long) proof that $\pi^*:\cQ_{d,n}\to\cQ_{d+1,n+1}$ is injective, so $0\to  \cQ_{d,n}\xrightarrow{\pi^*}{}\cQ_{d+1,n+1}\xrightarrow{\pi_*}{}\cQ_{d+1,n}\to0$ is exact. In contrast, it is straightforward to show that $\pi^*:\cA_{d,n}\to\cA_{d+1,n+1}$ is injective, but $\cA_{d,n}\xrightarrow{\pi^*}{}\cA_{d+1,n+1}\xrightarrow{\pi_*}{}\cA_{d+1,n}$ is not exact.
%  \end{rem}

The pullback $\pi^*:\cA^d_n\to\cA^d_{n+1}$, which by the projection formula is dual to $\pi_*:\cA_{d,n}\to\cA_{d,n+1}$, restricts to $\pi^*:\cK^d_n\to\cK^d_{n+1}$, and sends $\kappa_d^T$ on $\Mbar_{0,n}$ to $\kappa_d^T$ on $\Mbar_{0,n+1}$. The pushforward  $\pi_*:\cA^{d+1}_{n+1}\to\cA^d_{n}$ is dual to $\pi^*:\cA_{d+1,n+1}\to\cA_{d,n}$.

\begin{lem}
The pushforward  $\pi_*:\cA^{d+1}_{n+1}\to\cA^d_{n}$ restricts to $\pi_*:\cK^{d+1}_{n+1}\to\cK^d_{n}$, with 
\begin{align*}
\pi_* (\kappa_d^T)  =   \begin{cases}
                            0& n+1\in T\\
                            \kappa_d^T & n+1\not\in T
                          \end{cases}
\end{align*}
\end{lem}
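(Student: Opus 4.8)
The plan is to identify $\pi_*(\kappa_{d+1}^T)$ by testing it against a spanning set of $\cA_{d,n}$. Since the intersection pairing $\cA_{d,n}\times\cA^d_n\to\Q$ is nondegenerate and $\cA_{d,n}$ is spanned by the classes $[X_\tau]$ of $d$-dimensional boundary strata, a class in $\cA^d_n$ is determined by its pairings with all such $[X_\tau]$. Because $\cK^{d+1}_{n+1}$ is spanned by the $\kappa_{d+1}^T$ with $T\subseteq[n+1]$, it suffices to compute $\pi_*(\kappa_{d+1}^T)$ for each $T$ and observe that the answer lies in $\cK^d_n$; this last observation is exactly the assertion that $\pi_*$ restricts to a map $\cK^{d+1}_{n+1}\to\cK^d_n$, so the restriction claim comes for free once the formula is proved.

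The key step is the projection formula: for a $d$-dimensional stratum $X_\tau$ on $\Mbar_{0,n}$ one has $[X_\tau]\cdot\pi_*(\kappa_{d+1}^T)=(\pi^*[X_\tau])\cdot\kappa_{d+1}^T$, where $\pi^*\colon\cA_{d,n}\to\cA_{d+1,n+1}$ is the homological pullback. I would then split on the type of $\tau$. If $\tau$ is Type II then $\pi^*[X_\tau]$ is a sum of Type II classes, each pairing to zero with $\kappa_{d+1}^T$ by Lemma \ref{lem:pairkappapullbackswithstrata}, so the pairing vanishes; the same vanishing holds for the eventual right-hand side, so Type II strata impose no condition. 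If $\tau$ is Type I, then by the stratum-pullback formula \eqref{eq:pullbackstratum} one has $\pi^*[X_\tau]=[X_{\tau'}]+(\text{Type II classes})$ with $\Pi_*(\tau')=\Pi_*(\tau)\cup\{\{n+1\}\}$, and again the Type II error terms pair trivially, so by Lemma \ref{lem:combinatorialpairing} the pairing equals $\langle\Pi_*(\tau)\cup\{\{n+1\}\},\,T\rangle$.

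The two cases now drop out of Definition \ref{Def:Pairing}. The quantity $\langle\Pi_*(\tau)\cup\{\{n+1\}\},T\rangle$ is $1$ only if every part meets $T$; as one part is the singleton $\{n+1\}$, this already forces $n+1\in T$. Thus when $n+1\notin T$ the pairing is zero against every $\tau$, and nondegeneracy gives that the pushforward of $\kappa_{d+1}^T$ vanishes. When $n+1\in T$, no part of $\Pi_*(\tau)$ contains $n+1$, so ``every part meets $T$'' is equivalent to ``every part meets $T\setminus\{n+1\}$''; hence the pairing equals $\langle\Pi_*(\tau),T\setminus\{n+1\}\rangle=[X_\tau]\cdot\kappa_d^{T\setminus\{n+1\}}$ by Lemma \ref{lem:combinatorialpairing}. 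Matching against every stratum and invoking nondegeneracy identifies the pushforward with $\kappa_d^{T\setminus\{n+1\}}$. In either case the output lies in $\cK^d_n$, which yields the restriction.

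I expect the main obstacle to be the input on homological pullback of strata, namely the justification of \eqref{eq:pullbackstratum} together with the fact that $\pi^*$ of a Type II class is again a sum of Type II classes; these are what guarantee that all ``error'' terms pair trivially with the kappa pullbacks, after which the argument is pure bookkeeping with the pairing $\langle\cdot,\cdot\rangle$. An alternative I would keep in reserve is to argue by transpose: the combinatorial pairing realizes $\pi_*$ on $\cK^{\bullet}$ as the adjoint of the lift $\tilde\pi^*(\Pi)=\Pi\cup\{\{n+1\}\}$ of $\pi^*$ on $\cQ_{\bullet}$, which reproduces the same computation more quickly but requires first checking that the pairing descends compatibly.
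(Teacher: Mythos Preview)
Your argument is correct and is essentially the paper's own proof: both compute $[X_\tau]\cdot\pi_*(\kappa_{d+1}^T)=(\pi^*[X_\tau])\cdot\kappa_{d+1}^T$ via the projection formula, dispose of Type~II contributions using $\pi^*(\cV_{d,n})\subseteq\cV_{d+1,n+1}$, apply \eqref{eq:pullbackstratum} for Type~I $\tau$, and reduce to the combinatorial pairing $\langle\Pi_*(\tau)\cup\{\{n+1\}\},T\rangle$. The only cosmetic difference is that the paper packages the Type~II step as the containment $\pi_*(\cK^{d+1}_{n+1})\subseteq\cV_{d,n}^\perp$ and then tests only against Type~I strata, whereas you test against all strata and handle Type~II directly; the content is identical. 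Note also that your conclusion --- vanishing when $n+1\notin T$, and $\kappa_d^{T\setminus\{n+1\}}$ when $n+1\in T$ --- agrees with the paper's \emph{proof} and silently corrects the swapped cases (and the missing ``$\setminus\{n+1\}$'') in the displayed statement.
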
 
\begin{proof}
By Lemma \ref{cor:kappaandQduals}, \ref{it:KappaPerp}, we have that $\forall d,n, \cK^d_n\subseteq \cV_{d,n}^{\perp}$.  Since $\pi^*(\cV_{d,n})\subseteq \cV_{d+1,n+1}$, and since, by the projection formula, $\pi_*$ and $\pi^*$ are dual maps, we have that $\pi_*(\cK^{d+1}_{n+1})\subseteq \cV_{d,n}^{\perp}$. This means that for $T\subseteq[n+1]$, the class $\pi_*(\kappa_d^T)$ is determined by the functional that it defines on $\cQ_{d,n}$, i.e. by the values of $[X_\sigma]\cdot \pi_*(\kappa_d^T)$, where $X_\sigma$ ranges over all Type I $d$-dimensional boundary strata on $\Mbar_{0,n}$. Given such an $X_\sigma$, we have, by the projection formula, by the expression for $\pi^*([X_\sigma])$ given in Equation \ref{eq:pullbackstratum}, and by applying Lemma \ref{lem:combinatorialpairing} twice, that 
\begin{align*}
[X_\sigma]\cdot \pi_*(\kappa_d^T)=\pi^{*}([X_\sigma])\cdot \kappa_d^T &=\langle\Pi_*(\sigma)\cup\{\{n+1\}\},T\rangle\\
&=  \begin{cases}
      1&\text{if $\forall P\in\Pi_*(\sigma)\cup\{\{n+1\}\}, P\cap T\ne\emptyset$}\\
      0&\text{if $\exists P\in\Pi_*(\sigma)\cup\{\{n+1\}\}$ s.t. $P\cap T=\emptyset$} 
    \end{cases}\\
&          =  \begin{cases}
      1&\text{if $\forall P\in\Pi_*(\sigma), P\cap T\ne\emptyset$, and $n+1\in T$}\\
      0&\text{if $\exists P\in\Pi_*(\sigma)$ s.t. $P\cap T=\emptyset $, or if $n+1\not\in T$}
    \end{cases}\\
                &=  \begin{cases}
                \langle\Pi_*(\sigma),T\rangle &\text{if $n+1\in T$}\\
                0&\text{if $n+1\not\in T$}
                \end{cases}\\
                 &=  \begin{cases}
 [X_\sigma]\cdot \kappa_d^T  & \text{if $n+1\in T$}\\
                0&\text{if $n+1\not\in T$}
                \end{cases}
\end{align*}
\end{proof}

\section{Main results and proofs}\label{sec:Proofs}

%In this section, we prove Theorem \ref{thm:Main}. 
\subsection{The set-up}\label{sec:setup}
Throughout Section \ref{sec:Proofs}, we use the identification $\cQ_{d,n}=\Q\bSP_{d,n}/\cR_{d,n}$ introduced in Section \ref{sec:(co)homology}; we write an element of $\cQ_{d,n}$ as a $\Q$-linear combination of set partitions of $[n]$ with $d+3$ parts, rather than as a $\Q$-linear combination of fundamental classes of Type I boundary strata. 

\begin{Def}
For $n\ge1$ and $d\ge -3$, we set $\bK^d_n$ to be the set $\{T\subseteq[n]\thickspace|\thickspace\abs{T}\ge(d+3), \abs{T}\equiv (d+3)\mod 2\}$.
\end{Def}

\begin{Def}\label{Def:bKintocK}
There is a natural linear map $\psi_{d,n}:\Q\bK^d_n\to\cK^d_n$ sending $T$ to $\kappa_d^T$. 
\end{Def}

The map $\psi_{d,n}$, together with the intersection pairing $\cQ_{d,n}\times\cK_{n}^d\to\Q$ induces the pairing $\cQ_{d,n}\times\Q\bK^d_n\to\Q$, where, for $\Pi\in\bSP_{d,n}$ and $T\in \bK^d_n$, $\Pi\cdot T=\langle\Pi,T\rangle$ as in Definition \ref{Def:Pairing}.  
Note that if $\bS$ is an $S_n$-set, then $\Q\bS$ is canonically and $S_n$-equivariantly isomorphic to its dual. Thus $\Q\bK^d_n=(\Q\bK^d_n)^{\vee}$. 

\begin{Def}\label{def:phi}
For $d\ge -1$, the pairing $\langle.,.\rangle$ (Definition \ref{Def:Pairing}) between set partitions and subsets of $[n]$ induces a map $\tilde{\phi}_{d,n}:\Q\bSP_{d,n}\to (\Q\bK^{d}_{n})^{\vee}=\Q\bK^{d}_{n}$. For $d\ge1,$ $\tilde{\phi}_{d,n}$ descends to a map $\phi_{d,n}:\cQ_{d,n}\to \Q\bK^{d}_{n}$. The map $\tilde{\phi}_{d,n}$ on generators $\Pi\in\bSP_{d,n}$ is given explicitly by:
\begin{align*}
  \tilde{\phi}_{d,n}(\Pi)=\sum_{T\in\bK^{d}_{n}}\langle\Pi,T\rangle \cdot T.
\end{align*}
\end{Def}

%In Section \ref{sec:Main}, we induct on $n$ to show (Theorem \ref{thm:root}) that $\phi_{d,n}$ is an isomorphism; it is clear from definitions that:
%
%\begin{lem}\label{lem:isomeqivperfectpair}
%For fixed $d$ and $n$, $\phi_{d,n}$ is an isomorphism if and only if the intersection pairing $\cQ_{d,n}\times\Q\bK^d_n\to\Q$ is perfect.
%\end{lem}

\begin{Def}\label{Def:AlphaBeta} Define maps $\alpha:\Q\bK^{d}_{n}\to \Q\bK^{d+1}_{n+1}$ and $\beta:\Q\bK^{d}_{n+1}\to \Q\bK^{d}_{n}$, where:
\begin{align*}
\alpha(T)&= T\cup\{n+1\}& \beta(T)&= \begin{cases}
       T&n+1\not\in T\\
       0&n+1\in T
     \end{cases}
\end{align*}
\end{Def}

\begin{lem}\label{lem:exactsequence}
The following is an exact sequence:
\begin{align}
0\to\Q\bK^{d}_{n}\xrightarrow{\alpha}{}\Q\bK^{d+1}_{n+1}\xrightarrow{\beta}{}\Q\bK^{d+1}_{n}\to0\label{eq:seqbK}
\end{align}  
\end{lem}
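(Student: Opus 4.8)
The plan is to verify exactness directly on basis elements, since both $\alpha$ and $\beta$ carry each basis element either to a basis element or to zero, so all four conditions (well-definedness, injectivity of $\alpha$, surjectivity of $\beta$, exactness in the middle) reduce to elementary bookkeeping with the cardinality-and-parity constraints defining the sets $\bK^{\bullet}_{\bullet}$. First I would check well-definedness: if $T\in\bK^{d}_{n}$ then $T\subseteq[n]$, so $n+1\notin T$, and $\abs{T\cup\{n+1\}}=\abs{T}+1\ge d+4$ with $\abs{T}+1\equiv d+4\pmod 2$, whence $T\cup\{n+1\}\in\bK^{d+1}_{n+1}$; and if $T\in\bK^{d+1}_{n+1}$ with $n+1\notin T$, then $T\subseteq[n]$ already satisfies the defining conditions of $\bK^{d+1}_{n}$, so $\beta$ lands where claimed.

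Next I would record the easy exactness statements. The map $\alpha$ sends the basis $\bK^{d}_{n}$ to the set $\{T\cup\{n+1\}:T\in\bK^{d}_{n}\}$, whose elements are pairwise distinct (recover $T$ by deleting $n+1$), so $\alpha$ carries a basis to a linearly independent set and is injective. Every $S\in\bK^{d+1}_{n}$ satisfies $n+1\notin S$, hence $\beta(S)=S$, so $\Im\beta$ contains a basis of $\Q\bK^{d+1}_{n}$ and $\beta$ is surjective. Finally $\beta\circ\alpha=0$ since $n+1\in T\cup\{n+1\}$ for every $T$, so $\beta(\alpha(T))=0$; this gives $\Im\alpha\subseteq\ker\beta$.

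For exactness in the middle I would partition the basis $\bK^{d+1}_{n+1}$ into the elements $T$ with $n+1\in T$ and those with $n+1\notin T$: on the latter $\beta$ acts as the identity (viewing them inside $\bK^{d+1}_{n}$) and on the former $\beta$ acts as zero, so $\ker\beta$ is exactly the span of $\{T\in\bK^{d+1}_{n+1}:n+1\in T\}$. For such a $T$, the set $T\setminus\{n+1\}$ has cardinality $\abs{T}-1\ge d+3$ with $\abs{T}-1\equiv d+3\pmod 2$, so $T\setminus\{n+1\}\in\bK^{d}_{n}$ and $\alpha(T\setminus\{n+1\})=T$; hence $\ker\beta\subseteq\Im\alpha$, and combined with the reverse inclusion this yields $\ker\beta=\Im\alpha$. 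I do not anticipate any genuine obstacle; the only point requiring care is the cardinality-and-parity arithmetic, which is precisely what forces $\alpha$ to land in $\bK^{d+1}_{n+1}$ and $\beta$ to land in $\bK^{d+1}_{n}$ rather than in larger index sets.
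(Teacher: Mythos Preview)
Your proof is correct and follows exactly the same approach as the paper, which only sketches the argument by observing that $\Im(\alpha)=\Ker(\beta)=\Q\{T\in\bK^{d+1}_{n+1}\thickspace|\thickspace n+1\in T\}$. You have simply filled in the routine details of this observation: well-definedness, injectivity of $\alpha$, surjectivity of $\beta$, and the identification of both $\Im(\alpha)$ and $\Ker(\beta)$ with the span of basis elements containing $n+1$.
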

\begin{proof}[Sketch of proof]
Observe that $\Im(\alpha)=\Ker(\beta)=\Q\{T\in\bK^{d+1}_{n+1}\thickspace|\thickspace n+1\in T\}$.
\end{proof}

\begin{lem}\label{lem:alphabetacommutewithpushpull}
The following diagram commutes:
\begin{center}
    \begin{tikzcd}
      &\Q\bSP_{d,n}\arrow[r,"\tilde{\pi}^*"]\arrow[d,"\tilde{\phi}_{d,n}"]&\Q\bSP_{d+1,n+1}\arrow[r,"\tilde{\pi}_*"]\arrow[d,"\tilde{\phi}_{d+1,n+1}"]&\Q\bSP_{d+1,n}\arrow[d,"\tilde{\phi}_{d+1,n}"]\\
      &\Q\bK^{d}_{n}\arrow[r,"\alpha"]&\Q\bK^{d+1}_{n+1}\arrow[r,"\beta"]&\Q\bK^{d+1}_{n}
    \end{tikzcd}
  \end{center}
\end{lem}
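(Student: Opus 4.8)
The claim is a commutativity statement for a diagram of explicitly defined maps between free vector spaces on combinatorial sets, so the strategy is to verify it on generators. I would fix a generator $\Pi = \{P_1,\ldots,P_{d+3}\}\in\bSP_{d,n}$ for the left square, and a generator $\Pi = \{P_1,\ldots,P_{d+4}\}\in\bSP_{d+1,n+1}$ for the right square, push each one around both ways, and compare. Everything reduces to the combinatorial pairing $\langle\Pi,T\rangle$ of Definition \ref{Def:Pairing} together with the explicit formulas for $\tilde\pi^*$, $\tilde\pi_*$, $\alpha$, $\beta$, and $\tilde\phi$.

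\textbf{Left square.} Going right-then-down: $\tilde\pi^*(\Pi) = \Pi\cup\{\{n+1\}\}$, and then $\tilde\phi_{d+1,n+1}(\Pi\cup\{\{n+1\}\}) = \sum_{T\in\bK^{d+1}_{n+1}}\langle\Pi\cup\{\{n+1\}\},T\rangle\cdot T$. The key observation (already used in the proof of the pushforward lemma above) is that $\langle\Pi\cup\{\{n+1\}\},T\rangle = \langle\Pi,T\rangle$ if $n+1\in T$ and $0$ otherwise; since also $T\in\bK^{d+1}_{n+1}$ with $n+1\in T$ corresponds bijectively via $T\mapsto T\setminus\{n+1\}$ to $T'\in\bK^d_n$ (because removing one element drops $|T|$ by one and shifts the parity to match $d+3$), this sum equals $\sum_{T'\in\bK^d_n}\langle\Pi,T'\rangle\cdot(T'\cup\{n+1\}) = \alpha\bigl(\sum_{T'}\langle\Pi,T'\rangle\cdot T'\bigr) = \alpha(\tilde\phi_{d,n}(\Pi))$, which is exactly down-then-right. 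The only thing to check carefully here is the parity/cardinality bookkeeping that makes the index sets match up, and the identity $\langle\Pi\cup\{\{n+1\}\},T\rangle = \langle\Pi,T\rangle\cdot[n+1\in T]$, which is immediate from the definition of $\langle\cdot,\cdot\rangle$ since $\{n+1\}\cap T\ne\emptyset \iff n+1\in T$.

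\textbf{Right square.} Here I would split into two cases according to the definition of $\tilde\pi_*$. If some part of $\Pi$ equals $\{n+1\}$, then $\tilde\pi_*(\Pi)=0$, so going down-then-right-then gives $0$ after applying $\tilde\phi_{d+1,n}$; going right-then-down I get $\beta(\tilde\phi_{d+1,n+1}(\Pi)) = \beta\bigl(\sum_{T\in\bK^{d+1}_{n+1}}\langle\Pi,T\rangle\cdot T\bigr) = \sum_{T:\,n+1\notin T}\langle\Pi,T\rangle\cdot T$, and each such $T$ with $n+1\notin T$ misses the part $\{n+1\}$ of $\Pi$, so $\langle\Pi,T\rangle=0$ and the whole sum vanishes — the two agree. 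If no part of $\Pi$ is $\{n+1\}$, write $\Pi' = \tilde\pi_*(\Pi) = \{P_1\setminus\{n+1\},\ldots,P_{d+4}\setminus\{n+1\}\}\in\bSP_{d+1,n}$; then for $T\subseteq[n]$ one has $\langle\Pi',T\rangle = \langle\Pi,T\rangle$ (viewing $T$ as a subset of $[n+1]$ not containing $n+1$: removing $n+1$ from the parts of $\Pi$ does not change whether a part meets $T$, since $n+1\notin T$). Hence $\tilde\phi_{d+1,n}(\Pi') = \sum_{T\in\bK^{d+1}_n}\langle\Pi,T\rangle\cdot T$, while $\beta(\tilde\phi_{d+1,n+1}(\Pi)) = \sum_{T\in\bK^{d+1}_{n+1},\,n+1\notin T}\langle\Pi,T\rangle\cdot T$; since $\bK^{d+1}_n = \{T\in\bK^{d+1}_{n+1}\mid n+1\notin T\}$ literally, these coincide.

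\textbf{Main obstacle.} There is no real obstacle — the proof is a routine but slightly fiddly case-and-index check. The one place to be attentive is matching up the index sets $\bK^\bullet_\bullet$ under $T\mapsto T\cup\{n+1\}$ and $T\mapsto T\setminus\{n+1\}$, making sure the cardinality-and-parity constraints defining $\bK^d_n$ transform correctly (they do: adding or removing the single element $n+1$ changes both $|T|$ and $d$-by-one in lockstep). I would present the argument as "check on generators of each of the two squares" with the two cases for the right square, citing Definition \ref{Def:Pairing} and the explicit formulas for all five maps, and noting the identity $\langle\Pi\cup\{\{n+1\}\},T\rangle = [n+1\in T]\cdot\langle\Pi,T\rangle$ as the crux of the left square.
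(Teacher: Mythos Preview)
Your proposal is correct and follows essentially the same approach as the paper: verify each square on generators, use the identity $\langle\Pi\cup\{\{n+1\}\},T\rangle=[n+1\in T]\cdot\langle\Pi,T\setminus\{n+1\}\rangle$ together with the bijection $T\mapsto T\setminus\{n+1\}$ for the left square, and split the right square into the two cases $\{n+1\}\in\Pi$ and $\{n+1\}\notin\Pi$. One cosmetic slip: in Case~1 of the right square you have the labels ``down-then-right'' and ``right-then-down'' swapped (the path through $\tilde\pi_*$ then $\tilde\phi_{d+1,n}$ is right-then-down, and the path through $\tilde\phi_{d+1,n+1}$ then $\beta$ is down-then-right), but the computations themselves are correct.
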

\begin{proof}
\noindent\textbf{Commutativity of the left square:}
Given $\Pi\in\bSP_{d,n}$, we have
\begin{align*}
\phi_{d+1,n+1}(\tilde{\pi}^{*}(\Pi))=\sum_{T\in\bK^{d+1}_{n+1}}\langle\Pi\cup\{\{n+1\}\},T\rangle \cdot T&=\sum_{\substack{T\in\bK^{d+1}_{n+1}\\n+1\in T}}\langle\Pi\cup\{\{n+1\},T\rangle \cdot T\\
&=\sum_{\substack{T'\in\bK^{d}_{n}}}\langle\Pi,T'\rangle \cdot T'\cup\{n+1\}\\
%&=\sum_{\substack{T'\in\bK^{d}_{n}}}\langle\Pi,T'\rangle \cdot \alpha(T')\\
&=\alpha(\sum_{\substack{T'\in\bK^{d}_{n}}}\langle\Pi,T'\rangle \cdot T')=\alpha(\phi_{d,n}(\Pi))
\end{align*} 

\noindent\textbf{Commutativity of the right square:}
Given $\Pi'=\{P_1,\ldots,P_{d+4}\}\in\bSP_{d+1,n+1}$, we may assume without loss of generality that $n+1\in P_1$. There are two cases:

\noindent\textbf{Case 1:} $P_1=\{n+1\}$. Then $\tilde{\pi}_{*}(\Pi')=0$ so $\phi_{d+1,n}(\tilde{\pi}_{*}(\Pi'))=0$. Note that for $T\subset [n+1]$, we have that if $n+1\not\in T$, then $\langle\Pi',T\rangle=0$, while if $n+1\in T$, then $\beta(T)=0$. This implies that 
\begin{align*}
\beta(\phi_{d+1,n+1}(\Pi'))&=\beta(\sum_{T\in\bK^{d+1}_{n+1}}\langle\Pi',T\rangle \cdot T)=\beta(\sum_{\substack{T\in\bK^{d+1}_{n+1}\\n+1\in T}}\langle\Pi',T\rangle \cdot T)=0,
\end{align*}

\noindent\textbf{Case 2:} $P_1\ne\{n+1\}$. Then 

\begin{align*}
\phi_{d+1,n}(\tilde{\pi}_{*}(\Pi'))&=\sum_{T'\in\bK^{d+1}_{n}}\langle\{P_1\setminus\{n+1\},P_2,\ldots,P_{d+4}\},T'\rangle \cdot T'\\
&=\sum_{\substack{T'\in\bK^{d+1}_{n}\\T'\cap P_1\setminus\{n+1\}\ne\emptyset\\T'\cap P_2,\ldots,T'\cap P_{d+1}\ne\emptyset}} T'=\sum_{\substack{T'\in\bK^{d+1}_{n+1}\\n+1\not\in T\\\langle\Pi',T\rangle=1}} T'=\beta(\phi_{d+1,n+1}(\Pi'))
\end{align*}
\end{proof}

We use the following lemma several times; its proof follows from a standard diagram chase. 

\begin{lem}\label{lem:fourvariant}[Variant of the Four Lemma]
Suppose we have a commutative diagram of vector spaces as follows
  \begin{center}
 \begin{tikzcd}
      &\cW_1\arrow[r,"f_1"]\arrow[d,"h_1"]&\cW_2\arrow[r,"f_2"]\arrow[d,"h_2"]&\cW_3\arrow[d,"h_3"]\arrow[r]&0\\
      &\cX_1\arrow[r,"g_1"]&\cX_2\arrow[r,"g_2"]&\cX_3 
    \end{tikzcd}
  \end{center}
Suppose further that the bottom row is exact at $\cX_2$, that the top row is exact at $\cW_3$, and that $h_1$ and $h_3$ are surjective. Then $h_2$ is surjective. 
\end{lem}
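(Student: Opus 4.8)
The plan is a direct diagram chase, of exactly the sort used in the proof of the classical Four Lemma; all four hypotheses (surjectivity of $h_1$ and $h_3$, exactness of the bottom row at $\cX_2$, exactness of the top row at $\cW_3$, i.e. surjectivity of $f_2$) will be used once each, and the only content is to assemble them in the right order.

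Start with an arbitrary $x_2\in\cX_2$; the goal is to produce $w_2\in\cW_2$ with $h_2(w_2)=x_2$. First I would push $x_2$ forward to $g_2(x_2)\in\cX_3$, use surjectivity of $h_3$ to pick $w_3\in\cW_3$ with $h_3(w_3)=g_2(x_2)$, and then use surjectivity of $f_2$ (exactness of the top row at $\cW_3$) to pick $w_2'\in\cW_2$ with $f_2(w_2')=w_3$. By commutativity of the right square, $g_2(h_2(w_2'))=h_3(f_2(w_2'))=h_3(w_3)=g_2(x_2)$, so $x_2-h_2(w_2')\in\Ker(g_2)$.

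Next I would invoke exactness of the bottom row at $\cX_2$ to write $x_2-h_2(w_2')=g_1(x_1)$ for some $x_1\in\cX_1$, and then surjectivity of $h_1$ to pick $w_1\in\cW_1$ with $h_1(w_1)=x_1$. Commutativity of the left square gives $h_2(f_1(w_1))=g_1(h_1(w_1))=g_1(x_1)=x_2-h_2(w_2')$, so setting $w_2:=w_2'+f_1(w_1)$ yields $h_2(w_2)=x_2$, as desired.

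I do not expect any genuine obstacle here: the statement is elementary and the chase is forced at every step. The only point worth flagging is that this is literally the surjectivity half of the Four Lemma, and the hypothesis list has been pared down to exactly what that half needs (in particular no injectivity or exactness is required at $\cW_1,\cW_2,\cX_1,\cX_3$), so the write-up should simply record the chase above without appealing to the full Four Lemma as a black box.
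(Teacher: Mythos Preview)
Your proof is correct and is exactly the standard diagram chase the paper has in mind; the paper itself does not spell out the argument, stating only that ``its proof follows from a standard diagram chase.'' Your write-up fills in precisely those details, using each hypothesis once in the expected order.
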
 
\subsection{A preliminary lemma}
In this section, we prove some technical results --- Lemmas \ref{lem:evenoddsurjective}, \ref{lem:n2surjective} and  \ref{lem:n3surjective}. These are not of independent interest, but are necessary to prove Theorem \ref{thm:root} in Section \ref{sec:Main}. The proofs (and statements) of these three lemmas are conceptually similar to each other, as well as to those of Theorem \ref{thm:root}; all four proofs use the Four Lemma to induct on $n$. Lemma \ref{lem:evenoddsurjective} is required in the inductive step of Lemma \ref{lem:n2surjective}, which is required in the inductive step of Lemma \ref{lem:n3surjective}, which in turn is required in the inductive step of Theorem \ref{thm:root}. The proofs of Lemmas \ref{lem:evenoddsurjective} and \ref{lem:n2surjective} also involve some intricate combinatorics of set partitions and subsets.

\begin{Def}\label{def:EvenOdd} For $n>0$, we set:
\begin{align*}
\bE_{n}&:= \{T\subseteq[n]|\abs{T} \text{ even}\}; & \bO_{n}&:=\{T\subseteq[n]|\abs{T}\text{ odd}\};& \bF_{n}&:=\{(P_1,P_2)|P_1\cup P_2=[n],P_1\cap P_2=\emptyset,1\in P_1\}
  \end{align*}
\end{Def}

Note that $\bF_{n}\setminus\{([n],\emptyset)\}$ is in canonical bijection with $\bSP_{-1,n}$, so $\Q\bF_n$ is canonically isomorphic to $\Q\{([n],\emptyset)\}\oplus\Q\bSP_{-1,n}$. There are maps $\alpha:\Q\bE_n\to \Q\bO_{n+1}$, $\alpha:\Q\bO_n\to \Q\bE_{n+1}$, $\beta:\Q\bE_{n+1}\to \Q\bE_n$ and $\beta:\Q\bO_{n+1}\to \Q\bO_n$, analogous to the maps $\alpha$ and $\beta$ as in Definition \ref{Def:AlphaBeta}. Define maps $\odd_n:\Q\bF_{n}\to \Q\bO_{n}$ and $\even_n:\Q\bF_{n}\to \Q\bE_{n}$, where
\begin{align*}
\odd_n((P_1,P_2))&=\sum_{\substack{T\subseteq P_1\\\text{$\abs{T}$
  odd}}}(-T)+\sum_{\substack{T\subseteq P_2\\\text{$\abs{T}$
  odd}}}T; &
 \even_n((P_1,P_2))&=\sum_{\substack{T\subseteq P_1\\\text{$\abs{T}$
  even}}}(-T)+\sum_{\substack{T\subseteq P_2\\\text{$\abs{T}$
  even}}}(-T)
\end{align*}

\begin{lem}\label{lem:evenoddsurjective}
 For $n\ge1$, the maps $\odd_n$ and $\even_n$ are surjective.
\end{lem}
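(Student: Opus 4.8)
The plan is to prove surjectivity of $\odd_n$ and $\even_n$ simultaneously by induction on $n$, using the Four Lemma (Lemma \ref{lem:fourvariant}) against the exact sequences $0\to\Q\bE_n\xrightarrow{\alpha}\Q\bO_{n+1}\xrightarrow{\beta}\Q\bO_n\to0$ and $0\to\Q\bO_n\xrightarrow{\alpha}\Q\bE_{n+1}\xrightarrow{\beta}\Q\bE_n\to0$ (the $\bE$/$\bO$ analogues of Lemma \ref{lem:exactsequence}). First I would check the base case $n=1$ directly: $\bF_1=\{(\{1\},\emptyset)\}$, $\bO_1=\{\{1\}\}$, $\bE_1=\{\emptyset\}$, and one computes $\odd_1((\{1\},\emptyset))=-\{1\}$, $\even_1((\{1\},\emptyset))=-\emptyset$, so both maps are (negatives of) the obvious isomorphisms. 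For the inductive step, I would set up a commutative ladder whose top row is a right-exact piece built from $\Q\bF_n$'s (analogous to $\tilde\pi^*,\tilde\pi_*$), whose bottom row is the $\alpha,\beta$ sequence on the $\bO$'s (resp.\ $\bE$'s), and whose vertical maps are $\odd$ (resp.\ $\even$) in the appropriate ranks; then apply Lemma \ref{lem:fourvariant}, having verified the outer verticals are surjective by the inductive hypothesis.

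The combinatorial heart of the argument is identifying the correct maps $\Q\bF_n\to\Q\bF_{n+1}\to\Q\bF_n$ making the ladder commute. On $\bF$ there is an evident "add $n+1$" operation sending $(P_1,P_2)$ to $(P_1,P_2\cup\{n+1\})$ (and to $(P_1\cup\{n+1\},P_2)$), together with a "delete $n+1$" operation that returns $(P_1,P_2\setminus\{n+1\})$ but sends a pair in which $P_2=\{n+1\}$ (or $P_1=\{n+1\}$) to $0$, mirroring $\tilde\pi_*$. I would choose these so that, after applying $\odd_n$ or $\even_n$, adding $n+1$ to $P_2$ corresponds to $\alpha$ on the subset side up to the sign bookkeeping in the definitions of $\odd$ and $\even$ (note the sign patterns: $\odd$ has a $-$ on the $P_1$ side only, while $\even$ has $-$ on both sides, so the two cases need slightly different ladders). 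The key identities to verify are: (i) $\alpha\circ\odd_n = \odd_{n+1}\circ(\text{add }n+1\text{ to }P_2)$ — both sides sum over odd subsets of $P_1$ and of $P_2\cup\{n+1\}$ that contain $n+1$; (ii) $\beta\circ\odd_{n+1}=\odd_n\circ(\text{delete }n+1)$ — on the left, $\beta$ kills all terms containing $n+1$, leaving exactly odd subsets of $P_1$ and $P_2\setminus\{n+1\}$; and the analogues for $\even$. One must also check exactness of the top row at the right end, i.e.\ that the "delete $n+1$" map is surjective and that its kernel is generated by the image of "add $n+1$" together with pairs supported (in the $n+1$ slot) on $\{n+1\}$ alone — this is elementary since $\bF_{n+1}$ partitions according to which side of the pair $n+1$ lies on.

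The main obstacle I anticipate is \emph{not} the diagram chase but pinning down the top row precisely so that it is genuinely right-exact and the two squares commute on the nose, given the asymmetry between the $\odd$ and $\even$ definitions and the fact that $\Q\bF_n\cong\Q\{([n],\emptyset)\}\oplus\Q\bSP_{-1,n}$ only after removing the degenerate pair $([n],\emptyset)$. In particular I expect the degenerate pair $([n],\emptyset)$ to need special handling: under "delete $n+1$" the pair $([n+1],\emptyset)$ maps to $([n],\emptyset)$, which is fine, but one should make sure its image under $\even_n$ (namely $-\emptyset$, a nonzero vector) is accounted for so that the outer surjectivities really do feed through. Once the ladder is correctly assembled, Lemma \ref{lem:fourvariant} gives the middle surjectivity immediately, completing the induction.
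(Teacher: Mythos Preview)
Your overall shape is exactly the paper's: simultaneous induction on $n$, two ladders with bottom rows $0\to\Q\bE_n\xrightarrow{\alpha}\Q\bO_{n+1}\xrightarrow{\beta}\Q\bO_n\to0$ and $0\to\Q\bO_n\xrightarrow{\alpha}\Q\bE_{n+1}\xrightarrow{\beta}\Q\bE_n\to0$, and Lemma~\ref{lem:fourvariant} to push surjectivity through. The right square is fine: your identity (ii) $\beta\circ\odd_{n+1}=\odd_n\circ(\text{delete }n{+}1)$ is correct (and the deletion map never needs to return $0$, since $1\in P_1$ forces $P_1\ne\{n{+}1\}$ and $(P_1,\emptyset)\in\bF_n$ is a legitimate output).

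The genuine gap is the left square. Your identity (i), $\alpha\circ\odd_n=\odd_{n+1}\circ(\text{add }n{+}1\text{ to }P_2)$, is false, and in fact does not even typecheck: the bottom-left entry of your ladder is $\Q\bE_n$, so the left vertical must land in $\Q\bE_n$, whereas $\odd_n$ lands in $\Q\bO_n$. More substantively, $\odd_{n+1}(P_1,P_2\cup\{n{+}1\})$ contains all the odd-subset terms $-T$ with $T\subseteq P_1$, none of which contain $n{+}1$, so the result is not in $\Im(\alpha)$ at all. No single ``add $n{+}1$ to one side'' map can make the left square commute. The paper's fix is to take the \emph{antisymmetrized} map
\[
\gamma(P_1,P_2)=(P_1\cup\{n{+}1\},P_2)-(P_1,P_2\cup\{n{+}1\}),
\]
and then the terms not containing $n{+}1$ cancel, giving $\odd_{n+1}\circ\gamma=\alpha\circ\even_n$ and $\even_{n+1}\circ\gamma=\alpha\circ\odd_n$. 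So the two ladders are genuinely intertwined: the left vertical in the $\odd$-ladder is $\even_n$, and vice versa. You flagged the odd/even asymmetry as the likely sticking point, but the concrete identity you wrote down does not resolve it; the antisymmetrization is the missing idea.

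One minor correction in the base case: $\even_1((\{1\},\emptyset))=-2\emptyset$, not $-\emptyset$, since $\emptyset$ is an even subset of both $\{1\}$ and $\emptyset$. This does not affect surjectivity.
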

\begin{proof}
  We induct on $n$. \textbf{Base case:} $n=1$. We have:
  \begin{align*}
    \bF_{1}&=\{(\{1\},\emptyset)\}; &  \bE_1&=\{\emptyset\}; & \bO_1&=\{\{1\}\}; &   \odd_1((\{1\},\emptyset))&=-\{1\};&  \even_1((\{1\},\emptyset))&=-2\emptyset.
  \end{align*}
  This establishes the base case. 
  
  \noindent \textbf{Inductive hypothesis:} The proposition holds up to some $n\ge1$. 
  
  \noindent \textbf{Inductive step:} Define maps $\tilde{\pi'}_*:\Q\bF_{n+1}\to \Q\bF_{n}$ and $\gamma:\Q\bF_{n}\to \Q\bF_{n+1}$, where:
  \begin{align*}
    \tilde{\pi'}_*((P_1,P_2))&=(P_1\setminus\{n+1\},P_2\setminus\{n+1\})\\
 \gamma((P_1,P_2))&=(P_1'\cup\{n+1\},P_2')-(P_1',P_2'\cup\{n+1\}).
  \end{align*}
  The diagram below has exact rows; we claim it commutes.
  \begin{center}
    \begin{tikzcd}
      &\Q\bF_{n}\arrow[r,"\gamma"]\arrow[d,"\even_{n}"]&\Q\bF_{n+1}\arrow[r,"\tilde{\pi'}_*"]\arrow[d,"\odd_{n+1}"]&\Q\bF_{n}\arrow[r,""]\arrow[d,"\odd_{n}"]&0\\
0\arrow[r]&\Q\bE_{n}\arrow[r,"\alpha"]&\Q\bO_{n+1}\arrow[r,"\beta"]&\Q\bO_{n}\arrow[r]&0
    \end{tikzcd}
  \end{center}

 \noindent\textbf{Commutativity of the left square:} For
    $(P_1',P_2')\in \Q\bF_{n},$
    \begin{align*}
      \odd_{n+1}(\gamma(P_1',P_2'))&=\odd_{n+1}(P_1'\cup\{n+1\},P_2')-\odd_{n+1}(P_1',P_2'\cup\{n+1\})\\
      &=\sum_{\substack{T\subseteq P_1'\cup\{n+1\}\\\text{$\abs{T}$      odd}}}(-T)+\sum_{\substack{
        T\subseteq P_2'\\\text{$\abs{T}$
          odd}}}(T)-\left(\sum_{\substack{
        T\subseteq P_1'\\\text{$\abs{T}$
          odd}}}(-T)+\sum_{\substack{
        T\subseteq P_2'\cup\{n+1\}\\\text{$\abs{T}$
          odd}}}(T)\right)\\
      &=\sum_{\substack{T\subseteq P_1'\cup\{n+1\}\\\text{$\abs{T}$
            odd}\\
          n+1\in T}}(-T)+\sum_{\substack{T\subseteq P_2'\cup\{n+1\}\\\text{$\abs{T}$
            odd}\\
          n+1\in T}}(-T)\\
      &=\sum_{\substack{T'\subseteq P_1'\\\text{$\abs{T'}$
            even}}}(-(T'\cup\{n+1\}))+\sum_{\substack{T'\subseteq P_2'\\\text{$\abs{T'}$
            even}}}(-(T'\cup\{n+1\}))\\
      &=\alpha(\even_n(P_1',P_2')).
    \end{align*}
  \noindent\textbf{Commutativity of the right square:} For
    $(P_1,P_2)\in \Q\bF_{n+1},$
    \begin{align*}
      \odd_{n}(\tilde{\pi'}_*(P_1,P_2))&=\odd_{n}(P_1\setminus\{n+1\},P_2\setminus\{n+1\})\\
                                &=\sum_{\substack{T\subseteq P_1\setminus\{n+1\}\\\text{$\abs{T}$
      odd}}}(-T)+\sum_{\substack{T\subseteq P_2\setminus\{n+1\}\\\text{$\abs{T}$
      odd}}}(T)\\
%                                &=\sum_{\substack{T\subseteq P_1\setminus\{n+1\}\\\text{$\abs{T}$
%      odd}}}(-T)+\sum_{\substack{T\subseteq P_2\setminus\{n+1\}\\\text{$\abs{T}$
%      odd}}}(T)\\
                                &=\beta(\odd_{n+1}(P_1,P_2)).
    \end{align*}

  This proves the claim. By the inductive hypothesis, $\even_{n}$
  and $\odd_{n}$ are surjective, so by the Four Lemma, $\odd_{n+1}$ is surjective.
 
 The digram below has exact rows; we claim it commutes.
  \begin{center}
    \begin{tikzcd}
      &\Q\bF_{n}\arrow[r,"\gamma"]\arrow[d,"\odd_{n}"]&\Q\bF_{n+1}\arrow[r,"\tilde{\pi'}_*"]\arrow[d,"\even_{n+1}"]&\Q\bF_{n}\arrow[r]\arrow[d,"\even_{n}"]&0\\
      0\arrow[r]&\Q\bO_{n}\arrow[r,"\alpha"]&\Q\bE_{n+1}\arrow[r,"\beta"]&\Q\bE_{n}\arrow[r]&0
    \end{tikzcd}
  \end{center}

\noindent\textbf{Commutativity of the left square:} For
    $(P_1',P_2')\in \Q\bF_{n},$
    \begin{align*}
      \even_{n+1}(\gamma(P_1',P_2'))&=\even_{n+1}(P_1'\cup\{n+1\},P_2')-\even_{n+1}(P_1',P_2'\cup\{n+1\})\\
                               &=\sum_{\substack{T\subseteq P_1'\cup\{n+1\}\\\text{$\abs{T}$
      even}}}(-T)+\sum_{\substack{T\subseteq P_2'\\\text{$\abs{T}$
      even}}}(-T)-\left(\sum_{\substack{T\subseteq P_1'\\\text{$\abs{T}$
      even}}}(-T)+\sum_{\substack{T\subseteq P_2'\cup\{n+1\}\\\text{$\abs{T}$
      even}}}(-T)\right)\\
                               &=\sum_{\substack{T\subseteq P_1'\cup\{n+1\}\\\text{$\abs{T}$
      even}\\n+1\in T}}(-T)+\sum_{\substack{T\subseteq P_2'\cup\{n+1\}\\\text{$\abs{T}$
      even}\\n+1\in T}}(T)\\
                               &=\sum_{\substack{T'\subseteq P_1'\\\text{$\abs{T'}$
      odd}}}-(T'\cup\{n+1\})+\sum_{\substack{T'\subseteq P_2'\\\text{$\abs{T'}$
      odd}}}(T'\cup\{n+1\})\\
      &=\alpha(\odd_{n}(P_1',P_2')).
    \end{align*}
 \noindent\textbf{Commutativity of the right square:} For
    $(P_1,P_2)\in \Q\bF_{n+1},$
    \begin{align*}
      \even_{n}(\tilde{\pi'}_*(P_1,P_2))&=\even_{n}(P_1\setminus\{n+1\},P_2\setminus\{n+1\})\\
                                 &=\sum_{\substack{T\subseteq P_1\setminus\{n+1\}\\\text{$\abs{T}$
      even}}}(-T)+\sum_{\substack{T\subseteq P_2\setminus\{n+1\}\\\text{$\abs{T}$
      even}}}(-T)\\
                                 &=\beta(\even_{n+1}(P_1,P_2)).
    \end{align*}
  This proves the claim. Again, by the inductive hypothesis, $\odd_{n}$ and $\even_{n}$ are surjective, so by Lemma \ref{lem:fourvariant}, $\even_{n+1}$ is surjective.
\end{proof}

We only use the fact that $\odd_n$ is surjective to proceed; we use it to prove Lemma \ref{lem:n2surjective}

\begin{lem}\label{lem:n2surjective}
  For all $n\ge 2$, the map $\tilde{\phi}_{-1,n}:\Q\bSP_{-1,n}\to \Q\bK^{-1}_n$ is surjective.
\end{lem}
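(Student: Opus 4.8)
The plan is to prove surjectivity of $\tilde{\phi}_{-1,n}$ by induction on $n$, following the template of the proof of Lemma~\ref{lem:evenoddsurjective}: assemble a ladder of two short exact sequences and apply the Four Lemma variant, Lemma~\ref{lem:fourvariant}. For the base case $n=2$, the set $\bSP_{-1,2}$ consists of the single partition $\{\{1\},\{2\}\}$, the set $\bK^{-1}_2$ consists of the single subset $\{1,2\}$, and $\tilde{\phi}_{-1,2}$ sends the former to the latter, so it is an isomorphism.

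For the inductive step I assume $n\ge 2$ and that $\tilde{\phi}_{-1,n}$ is surjective. First I would introduce a map $\delta\colon\Q\bF_n\to\Q\bSP_{-1,n+1}$, defined on generators by
\[
\delta\bigl((P_1,P_2)\bigr)=\{P_1\cup\{n+1\},P_2\}-\{P_1,P_2\cup\{n+1\}\},
\]
with the convention that a formal symbol $\{A_1,A_2\}$ one of whose entries is empty is read as $0$; in particular $\delta\bigl(([n],\emptyset)\bigr)=-\{[n],\{n+1\}\}$. Using that $\bK^{-2}_n=\bO_n$, I then claim the diagram
\begin{center}
\begin{tikzcd}
&\Q\bF_n\arrow[r,"\delta"]\arrow[d,"\odd_n"]&\Q\bSP_{-1,n+1}\arrow[r,"\tilde{\pi}_*"]\arrow[d,"\tilde{\phi}_{-1,n+1}"]&\Q\bSP_{-1,n}\arrow[r]\arrow[d,"\tilde{\phi}_{-1,n}"]&0\\
0\arrow[r]&\Q\bK^{-2}_n\arrow[r,"\alpha"]&\Q\bK^{-1}_{n+1}\arrow[r,"\beta"]&\Q\bK^{-1}_n\arrow[r]&0
\end{tikzcd}
\end{center}
commutes and has exact rows: the bottom row is exact by Lemma~\ref{lem:exactsequence} applied with $d=-2$, and the top row is exact at $\Q\bSP_{-1,n}$ because $\tilde{\pi}_*$ is surjective. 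Commutativity of the right-hand square is the right-hand square of Lemma~\ref{lem:alphabetacommutewithpushpull} (whose proof goes through verbatim when $d=-2$). Granting the claim, $\odd_n$ is surjective by Lemma~\ref{lem:evenoddsurjective}, $\tilde{\phi}_{-1,n}$ is surjective by the inductive hypothesis, and hence Lemma~\ref{lem:fourvariant} gives that $\tilde{\phi}_{-1,n+1}$ is surjective, completing the induction.

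The crux — and the step I expect to be the main obstacle — is checking commutativity of the left-hand square, i.e.\ that $\tilde{\phi}_{-1,n+1}\bigl(\delta((P_1,P_2))\bigr)=\alpha\bigl(\odd_n((P_1,P_2))\bigr)$ for every $(P_1,P_2)\in\bF_n$; this is the combinatorial heart of the argument. I would expand the difference $\tilde{\phi}_{-1,n+1}(\{P_1\cup\{n+1\},P_2\})-\tilde{\phi}_{-1,n+1}(\{P_1,P_2\cup\{n+1\}\})$ as a $\Z$-linear combination of even nonempty subsets $S\subseteq[n+1]$, using the explicit formula of Definition~\ref{def:phi}. A subset $S$ not containing $n+1$ meets $P_1\cup\{n+1\}$ exactly when it meets $P_1$ and meets $P_2\cup\{n+1\}$ exactly when it meets $P_2$, so it contributes identically to the two terms and cancels. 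A subset of the form $S=S'\cup\{n+1\}$ with $S'\subseteq[n]$ of odd size occurs with coefficient $[\,S'\cap P_2\ne\emptyset\,]-[\,S'\cap P_1\ne\emptyset\,]$, which equals $+1$ when $\emptyset\ne S'\subseteq P_2$, equals $-1$ when $\emptyset\ne S'\subseteq P_1$, and equals $0$ when $S'$ meets both $P_1$ and $P_2$. Since an odd-size subset is automatically nonempty, summing over all such $S$ yields
\[
\sum_{\substack{T\subseteq P_2\\ \abs{T}\text{ odd}}}(T\cup\{n+1\})\;-\;\sum_{\substack{T\subseteq P_1\\ \abs{T}\text{ odd}}}(T\cup\{n+1\}),
\]
which by the definitions of $\odd_n$ and $\alpha$ is exactly $\alpha(\odd_n((P_1,P_2)))$. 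The degenerate case $P_2=\emptyset$ is covered by the same computation — there only the subsets $S$ with $n+1\in S$ and $S\cap[n]\ne\emptyset$ survive — and is immediate.
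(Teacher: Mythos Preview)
Your proof is correct and follows essentially the same approach as the paper: induction on $n$ with the same base case, the same auxiliary map $\delta$ (called $\gamma$ in the paper) from $\Q\bF_n$ to $\Q\bSP_{-1,n+1}$, the same ladder diagram with $\odd_n$ on the left, and the same application of the Four Lemma variant after verifying commutativity of the left square by the identical combinatorial cancellation. Your explicit identification $\bK^{-2}_n=\bO_n$ and your separate treatment of the degenerate case $P_2=\emptyset$ are minor expository additions; the paper leaves the latter implicit.
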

\begin{proof}
  We induct on $n$. 
  
\noindent \textbf{Base case:} $n=2.$ We have $\bSP_{-1,2}=\{\{\{1\},\{2\}\}\}$ and $\bK^{-1}_2=\{\{1,2\}\}$. We have $\tilde{\phi}_{2,2}(\{\{1\},\{2\}\})=\langle\{\{1\},\{2\}\},\{1,2\}\rangle\cdot\{1,2\}=1\cdot\{1,2\}$, which shows that $\tilde{\phi}_{2,2}$ is surjective. 
  
\noindent  \textbf{Inductive hypothesis:} The lemma holds up to some $n\ge 2$. 
  
\noindent  \textbf{Inductive step:} Define a map $\gamma:\Q\bF_{n}\to \Q\bSP_{-1,n+1}$, where $\gamma((P_1', P_2'))=\{P_1'\cup\{n+1\}, P_2'\}-\{P_1', P_2'\cup\{n+1\}\}$. Consider the digram 
  \begin{center}
    \begin{tikzcd}
      &\Q\bF_{n}\arrow[r,"\gamma"]\arrow[d,"\odd_{n}"]&\Q\bSP_{-1,n+1}\arrow[r,"\tilde{\pi}_*"]\arrow[d,"\tilde{\phi}_{-1,n+1}"]&\Q\bSP_{-1,n}\arrow[r]\arrow[d,"\tilde{\phi}_{-1,n}"]&0\\
      0\arrow[r]&\Q\bO_{n}\arrow[r,"\alpha"]&\Q\bK^{-1}_{n+1}\arrow[r,"\beta"]&\Q\bK^{-1}_{n}\arrow[r]&0
    \end{tikzcd}
  \end{center}
 Here, $\tilde{\pi}_*\circ\gamma=0$, the bottom row is exact, and $\tilde{\pi}_*$ is surjective. Note that the right square commutes by Lemma \ref{lem:alphabetacommutewithpushpull}; we claim the left square commutes as well.

 \noindent\textbf{Commutativity of the left square:} For $(P_1',P_2')\in \Q\bF_{n},$
  \begin{align*}
    \tilde{\phi}_{-1,n+1}(\gamma(P_1',P_2'))&=\tilde{\phi}_{-1,n+1}(\{P_1'\cup\{n+1\},P_2'\})-\tilde{\phi}_{-1,n+1}(\{P_1'\},P_2'\cup\{n+1\}\})\\
                                 &=\sum_{\substack{T\subseteq[n+1]\\\abs{T}\ge2\\\text{$\abs{T}$
    even}\\T\cap(P_1'\cup\{n+1\})\ne\emptyset\\T\cap P_2'\ne\emptyset}}(T)-\sum_{\substack{T\subseteq[n+1]\\\abs{T}\ge2\\\text{$\abs{T}$
    even}\\T\cap(P_1')\ne\emptyset\\T\cap
    P_2'\cup\{n+1\}\ne\emptyset}}(T)\\
                                 &=\sum_{\substack{T\subseteq[n+1]\\\abs{T}\ge2\\\text{$\abs{T}$
    even}\\n+1\in T\\T\setminus\{n+1\}\subseteq P_2'}}(T)-\sum_{\substack{T\subseteq[n+1]\\\abs{T}\ge2\\\text{$\abs{T}$
    even}\\n+1\in T\\T\setminus\{n+1\}\subseteq P_1'}}(T)\\
                                 &=\sum_{\substack{T'\subseteq[n]\\\text{$\abs{T}$
    odd}\\T'\subseteq P_2'}}(T'\cup\{n+1\})-\sum_{\substack{T'\subseteq[n]\\\text{$\abs{T}$
    odd}\\T'\subseteq P_1'}}(T'\cup\{n+1\})\\
                                 &=\alpha(\odd_{n}(P_1',P_2')).
  \end{align*}
This proves the claim. Since $\odd_{n}$ is surjective, and by the
inductive hypothesis so is $\tilde{\phi}_{-1,n}$, By Lemma \ref{lem:fourvariant}, $\tilde{\phi}_{-1,n+1}$ is surjective.
\end{proof}

\begin{lem}\label{lem:n3surjective}
  For all $n\ge 3$, the map $\tilde{\phi}_{0,n}:\Q\bSP_{0,n}\to \Q\bK^{0}_{n}$ is surjective.
\end{lem}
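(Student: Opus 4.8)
The plan is to follow the template of the proofs of Lemmas \ref{lem:evenoddsurjective} and \ref{lem:n2surjective}: I would induct on $n$, at each step applying the Four Lemma variant (Lemma \ref{lem:fourvariant}) to a commutative ladder assembled from the forgetful maps $\tilde{\pi}^*,\tilde{\pi}_*$ on the spaces $\Q\bSP$, the maps $\alpha,\beta$ on the spaces $\Q\bK$, and the maps $\tilde{\phi}$. The simplification relative to the previous two lemmas is that the ladder needed here is, after adjoining the zero objects at its two ends, precisely the diagram of Lemma \ref{lem:alphabetacommutewithpushpull} specialized to $d=-1$; so no new combinatorial identity of set partitions and subsets has to be verified.

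First I would dispose of the base case $n=3$ by direct inspection. Here $\bSP_{0,3}=\{\{\{1\},\{2\},\{3\}\}\}$ and $\bK^0_3=\{\{1,2,3\}\}$ are both singletons, and $\tilde{\phi}_{0,3}(\{\{1\},\{2\},\{3\}\})=\langle\{\{1\},\{2\},\{3\}\},\{1,2,3\}\rangle\cdot\{1,2,3\}=1\cdot\{1,2,3\}$, so $\tilde{\phi}_{0,3}$ is surjective.

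For the inductive step, assuming $\tilde{\phi}_{0,m}$ surjective for all $3\le m\le n$, I would consider the diagram
\begin{center}
\begin{tikzcd}
&\Q\bSP_{-1,n}\arrow[r,"\tilde{\pi}^*"]\arrow[d,"\tilde{\phi}_{-1,n}"]&\Q\bSP_{0,n+1}\arrow[r,"\tilde{\pi}_*"]\arrow[d,"\tilde{\phi}_{0,n+1}"]&\Q\bSP_{0,n}\arrow[r]\arrow[d,"\tilde{\phi}_{0,n}"]&0\\
0\arrow[r]&\Q\bK^{-1}_{n}\arrow[r,"\alpha"]&\Q\bK^{0}_{n+1}\arrow[r,"\beta"]&\Q\bK^{0}_{n}\arrow[r]&0
\end{tikzcd}
\end{center}
The bottom row is exact by Lemma \ref{lem:exactsequence} (with $d=-1$), and both squares commute by Lemma \ref{lem:alphabetacommutewithpushpull} (with $d=-1$). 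The top row is exact at $\Q\bSP_{0,n}$ because $\tilde{\pi}_*$ is surjective: adjoining $n+1$ to any block of a three-block set partition of $[n]$ gives a three-block set partition of $[n+1]$ that maps onto it. Finally $\tilde{\phi}_{-1,n}$ is surjective by Lemma \ref{lem:n2surjective} (applicable since $n\ge 3\ge 2$) and $\tilde{\phi}_{0,n}$ is surjective by the inductive hypothesis. Lemma \ref{lem:fourvariant} then yields that $\tilde{\phi}_{0,n+1}$ is surjective, completing the induction.

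I do not expect a genuine obstacle. The only items needing care are bookkeeping: checking that Lemma \ref{lem:alphabetacommutewithpushpull} indeed covers the index $d=-1$ (so that the maps $\tilde{\phi}_{-1,n}$ and $\tilde{\pi}^*\colon\Q\bSP_{-1,n}\to\Q\bSP_{0,n+1}$ are the ones appearing there), and that every hypothesis of the Four Lemma variant holds for the specific spaces above. In contrast with Lemmas \ref{lem:evenoddsurjective} and \ref{lem:n2surjective}, there is no auxiliary space $\Q\bF_n$ or ad hoc connecting map $\gamma$ to introduce and no explicit manipulation of alternating sums of subsets, so the argument reduces to a formal application of results already in place.
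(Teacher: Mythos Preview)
Your proposal is correct and follows essentially the same argument as the paper: the same base case computation at $n=3$, the same commutative ladder coming from Lemma~\ref{lem:alphabetacommutewithpushpull} with $d=-1$, and the same appeal to Lemma~\ref{lem:n2surjective} and the inductive hypothesis to feed into the Four Lemma variant. If anything you are slightly more explicit than the paper in noting why $\tilde{\pi}_*$ is surjective on the top row.
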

\begin{proof}
We induct on $n$. 

\noindent\textbf{Base case:} We have $\bSP_{0,3}=\{\{\{1\},\{2\},\{3\}\}\}$, $\bK^{0}_{3}=\{\{1,2,3\}\}$, and $\tilde{\phi}_{0,3}(\{\{1\},\{2\},\{3\}\})=\{1,2,3\}$, so $\tilde{\phi}_{0,3}$ is surjective. 

\noindent\textbf{Inductive hypothesis:} The proposition holds up to some $n\ge4$. 

%We have
%  \begin{align*}
%    \bSP_{0,4}&=\{\{1,2\},\{3\},\{4\}\},\{\{1,3\},\{2\},\{4\}\},\{\{1,4\},\{2\},\{3\}\},\{\{2,3\},\{1\},\{4\}\},\\&\quad\quad\quad\{\{2,4\},\{1\},\{3\}\},\{\{3,4\},\{1\},\{2\}\};\\
%    \bK^{0}_{4}&=\{1,2,3\},\{1,2,4\},\{1,3,4\},\{2,3,4\};\quad\quad\quad\quad\quad\quad\quad\quad\quad\quad\quad\quad\quad\quad\quad\quad\text{and}\\
%   \tilde{\phi}_{0,4}(\{\{2,4\},\{1\},\{3\}\}&+\{\{3,4\},\{1\},\{2\}\}-\{\{2,3\},\{1\},\{4\}\})\\
%              &=\{1,2,3\}+\{1,3,4\}+\{1,2,3\}+\{1,2,4\}-\{1,3,4\}-\{1,2,4\}=2\{1,2,3\},
%  \end{align*}
%  so $\{1,2,3\}\in\Im(\tilde{\phi}_{0,4}).$ By $S_3$-equivariance, every other
%  $\{i,j,k\}$ is in $\Im(\tilde{\phi}_{0,4}),$ so $\tilde{\phi}_{0,4}$ is surjective.

\noindent\textbf{Inductive step:} Consider the following diagram, which commutes by Lemma \ref{lem:alphabetacommutewithpushpull}
\begin{center}
    \begin{tikzcd}
      &\Q\bSP_{-1,n}\arrow[r,"\tilde{\pi}^*"]\arrow[d,"\tilde{\phi}_{-1,n}"]&\Q\bSP_{0,n+1}\arrow[r,"\tilde{\pi}_*"]\arrow[d,"\tilde{\phi}_{0,n+1}"]&\Q\bSP_{0,n}\arrow[r]\arrow[d,"\tilde{\phi}_{0,n}"]&0\\
      0\arrow[r]&\Q\bK^{-1}_{n}\arrow[r,"\alpha"]&\Q\bK^{0}_{n+1}\arrow[r,"\beta"]&\Q\bK^{0}_{n}\arrow[r]&0
    \end{tikzcd}
  \end{center}
  By the inductive hypothesis, $\tilde{\phi}_{0,n}$ is
  surjective. By Lemma \ref{lem:n2surjective}, $\tilde{\phi}_{-1,n}$ is
  surjective, so by the Four Lemma, $\tilde{\phi}_{0,n+1}$ is surjective, as desired.
\end{proof}

\subsection{An inductive proof of Theorem \ref{thm:root}}\label{sec:Main}

\begin{lem}\label{lem:dimensionsequal}
  $\dim\cQ_{1,n}=\dim \Q\bK^1_{n}$ for all $n\ge 4$.
\end{lem}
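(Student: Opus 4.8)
The plan is to compute $\dim\cQ_{1,n}$ and $\dim\Q\bK^1_n$ separately and observe that both equal the rank of $A^1(\Mbar_{0,n},\Q)$. The first step is to identify $\cQ_{1,n}$ with $\cA_{1,n}$ by checking that every one-dimensional boundary stratum is Type I: since $X_\sigma\isom\prod_v\Mbar_{0,\val(v)}$ has dimension $\sum_v(\val(v)-3)$, if $\dim X_\sigma=1$ then exactly one vertex of $\sigma$ has valence $4$ and every other vertex is trivalent, so $\sigma$ has a unique vertex of valence at least four and is Type I. Hence $\cV_{1,n}=0$ and $\cQ_{1,n}=\cA_{1,n}/\cV_{1,n}=\cA_{1,n}=A_1(\Mbar_{0,n},\Q)$.

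Next I would invoke the non-degenerate intersection pairing $\cA_{1,n}\times\cA^1_n\to\Q$ to get $\dim\cA_{1,n}=\dim\cA^1_n=\dim A^1(\Mbar_{0,n},\Q)$, together with the classical value of the Picard rank $\dim A^1(\Mbar_{0,n},\Q)=2^{n-1}-\binom{n}{2}-1$, which follows from Keel's presentation of the Chow ring \cite{Keel1992}. (Alternatively, one can read off $\dim\cA_{1,n}$ from the presentation $\cQ_{1,n}=\Q\bSP_{1,n}/\cR_{1,n}$, i.e.\ from the Kontsevich--Manin relations \cite{KontsevichManin1994} among one-dimensional strata, but quoting the Picard rank is faster.)

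On the other side, $\dim\Q\bK^1_n=\abs{\bK^1_n}$ counts the subsets of $[n]$ of even cardinality at least $4$, which by $\sum_{k\text{ even}}\binom{n}{k}=2^{n-1}$ equals $2^{n-1}-\binom{n}{0}-\binom{n}{2}=2^{n-1}-\binom{n}{2}-1$. Comparing the two expressions yields $\dim\cQ_{1,n}=\dim\Q\bK^1_n$.

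This lemma is essentially a bookkeeping statement, so I do not anticipate a real obstacle; the two points worth stating carefully are the (easy) fact that one-dimensional strata contribute nothing to $\cV_{d,n}$, so that $\cQ_{1,n}=\cA_{1,n}$, and the small binomial identity in the last step. For the degenerate case $n=4$, where $\cV_{1,4}$ is vacuous, one checks directly that both sides equal $1$.
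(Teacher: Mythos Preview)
Your proposal is correct and follows essentially the same approach as the paper: both observe that one-dimensional strata are all Type~I so that $\cQ_{1,n}=\cA_{1,n}$, then invoke the known Picard rank $2^{n-1}-\binom{n}{2}-1$ (the paper cites \cite{FarkasGibney2003} rather than Keel, and works directly with $\dim\cA_{1,n}$ rather than passing through $\cA^1_n$ via duality, but these are cosmetic differences), and finally count the even subsets of size at least four.
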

\begin{proof}
  There are no Type II 1-dimensional boundary strata, so $\forall n\ge 4$, $\cV_{1,n}=\{0\}$ and $\cQ_{1,n}\cong\cA_{1,n}$. By \cite{FarkasGibney2003}, $\dim\cA_{1,n}=2^{n-1}-\binom{n}{2}-1$. On the other hand,
\begin{align*}
  \dim \Q\bK^1_{n}=\#\{T\subseteq[n]|\text{$\abs{T}$ even},\abs{T}\ge4\}=2^{n-1}-\binom{n}{2}-1.
\end{align*}
\end{proof}

\begin{lem}\label{lem:nminus3}
For all $n\ge 4$, $\phi_{n-3,n}$ is an isomorphism.
\end{lem}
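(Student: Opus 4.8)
The plan is to treat the case $d=n-3$ as a degenerate one, in which both the source and the target of $\phi_{n-3,n}$ are one–dimensional, so that it suffices to check that the map sends a generator to a nonzero multiple of a generator.

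First I would pin down the two sides explicitly. A set partition of $[n]$ into $(n-3)+3=n$ parts must be the partition into singletons, so $\bSP_{n-3,n}=\{\Pi_0\}$ with $\Pi_0:=\{\{1\},\dots,\{n\}\}$. The relation space $\cR_{n-3,n}$ is spanned by elements indexed by set partitions of $[n]$ into $(n-3)+4=n+1$ parts, of which there are none; likewise there are no Type II boundary strata of dimension $n-3$ (the dual tree of such a stratum has at least two vertices, hence at least one edge, forcing dimension at most $n-4$), so $\cV_{n-3,n}=0$. Therefore $\cQ_{n-3,n}=\Q\bSP_{n-3,n}=\Q\,\Pi_0$ is one–dimensional (equivalently $\cQ_{n-3,n}\cong\cA_{n-3,n}=\Q\cdot[\Mbar_{0,n}]$). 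On the other side, the only $T\subseteq[n]$ with $\abs{T}\ge n$ is $[n]$ itself, and $\abs{[n]}=n\equiv n\bmod 2$, so $\bK^{n-3}_n=\{[n]\}$ and $\Q\bK^{n-3}_n$ is one–dimensional.

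Next I would evaluate the map on the generator. Using the explicit formula for $\tilde\phi_{n-3,n}$ from Definition \ref{def:phi}, the defining sum collapses to a single term:
\[
\phi_{n-3,n}(\Pi_0)=\sum_{T\in\bK^{n-3}_n}\langle\Pi_0,T\rangle\cdot T=\langle\Pi_0,[n]\rangle\cdot[n].
\]
Every part $\{i\}$ of $\Pi_0$ meets $[n]$, so $\langle\Pi_0,[n]\rangle=1$ and $\phi_{n-3,n}(\Pi_0)=[n]$. Thus $\phi_{n-3,n}$ takes a basis of $\cQ_{n-3,n}$ to a basis of $\Q\bK^{n-3}_n$, and is therefore an isomorphism.

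I do not anticipate any real obstacle here: this lemma serves essentially as a base case for the induction in Section \ref{sec:Main}, and the only points requiring a moment's care are (i) confirming that $\cR_{n-3,n}$ and $\cV_{n-3,n}$ really vanish, so that $\cQ_{n-3,n}$ is genuinely one–dimensional and spanned by $\Pi_0$, and (ii) checking that the combinatorial pairing $\langle\Pi_0,[n]\rangle$ evaluates to $1$ and not $0$.
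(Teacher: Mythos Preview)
Your proof is correct and follows essentially the same approach as the paper: both identify $\cQ_{n-3,n}$ and $\Q\bK^{n-3}_n$ as one-dimensional with explicit generators $\{\{1\},\ldots,\{n\}\}$ and $[n]$, and then check that $\phi_{n-3,n}$ carries one to the other. Your write-up simply supplies a bit more justification for why $\cR_{n-3,n}$ and $\cV_{n-3,n}$ vanish.
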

\begin{proof}
For all $n\ge 4$, we have that $\cV_{n-3,n}=\{0\}$, $\cA_{n-3,n}=\cQ_{n-3,n}=\Q\{\{\{1\},\ldots,\{n\}\}\}$, $\bK^{n-3}_n=\{[n]\}$, and  $\phi_{n-3,n}:\cQ_{n-3,n}\to\Q\bK^{n-3}_{n}$ sends $\{\{1\},\ldots,\{n\}\}$ to $[n]$. Thus $\phi_{n-3,n}$ is an
  isomorphism.
\end{proof}

\begin{thm}\label{thm:root}
For $n\ge4$ and $d$ such that $1\le d\le n-3$, $\phi_{d,n}:\cQ_{d,n}\to \Q\bK^{d}_{n}$ is an isomorphism. 
\end{thm}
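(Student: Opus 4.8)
The plan is to deduce that $\phi_{d,n}$ is an isomorphism from the two ``boundary'' cases $d=1$ and $d=n-3$, which are already settled (Lemmas \ref{lem:dimensionsequal} and \ref{lem:nminus3}), by propagating inward along the exact sequences of Lemmas \ref{lem:exactinmiddle} and \ref{lem:exactsequence} through the commutative diagram of Lemma \ref{lem:alphabetacommutewithpushpull}. The argument splits into two halves: first surjectivity of every $\phi_{d,n}$, then injectivity; both run by induction on $n$, and the whole scheme bottoms out at the combinatorial input of Lemma \ref{lem:n3surjective} (which itself rests on Lemmas \ref{lem:n2surjective} and \ref{lem:evenoddsurjective}).

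\textbf{Surjectivity.} I would show by induction on $n$ that $\tilde{\phi}_{d,n}\colon\Q\bSP_{d,n}\to\Q\bK^d_n$ is surjective for all $1\le d\le n-3$; since $\Q\bSP_{d,n}\onto\cQ_{d,n}$ and $\tilde{\phi}_{d,n}$ factors through this quotient, this gives surjectivity of $\phi_{d,n}$. The case $d=n-3$ (in particular the base case $n=4$) is Lemma \ref{lem:nminus3}. For $1\le d\le n-4$ (which only occurs for $n\ge5$, so $n-1\ge 4$), apply the Four Lemma (Lemma \ref{lem:fourvariant}) to the two rows of the diagram of Lemma \ref{lem:alphabetacommutewithpushpull} with $(d,n)$ replaced by $(d-1,n-1)$, so that $\tilde{\phi}_{d,n}$ is the middle vertical map: the left map is $\tilde{\phi}_{d-1,n-1}$, surjective by the inductive hypothesis if $d\ge2$ and by Lemma \ref{lem:n3surjective} if $d=1$; the right map is $\tilde{\phi}_{d,n-1}$, surjective by the inductive hypothesis; the top row is exact at its right-hand term because $\tilde{\pi}_*$ is visibly surjective; and the bottom row is exact by Lemma \ref{lem:exactsequence}. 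Hence $\tilde{\phi}_{d,n}$ is surjective.

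\textbf{Isomorphism.} Again induct on $n$, proving $\phi_{d,n}$ is an isomorphism for all $1\le d\le n-3$. The case $d=1$ holds because $\phi_{1,n}$ is surjective (previous paragraph) between spaces of equal dimension (Lemma \ref{lem:dimensionsequal}); the case $d=n-3$ is Lemma \ref{lem:nminus3}; these dispatch the base cases $n=4,5$. For $2\le d\le n-4$ (so $n\ge6$ and $n-1\ge5$), descend the diagram of Lemma \ref{lem:alphabetacommutewithpushpull} (with $(d,n)$ shifted to $(d-1,n-1)$) to the quotients $\cQ$:
\begin{center}
\begin{tikzcd}
&\cQ_{d-1,n-1}\arrow[r,"\pi^*"]\arrow[d,"\phi_{d-1,n-1}"]&\cQ_{d,n}\arrow[r,"\pi_*"]\arrow[d,"\phi_{d,n}"]&\cQ_{d,n-1}\arrow[r]\arrow[d,"\phi_{d,n-1}"]&0\\
0\arrow[r]&\Q\bK^{d-1}_{n-1}\arrow[r,"\alpha"]&\Q\bK^{d}_{n}\arrow[r,"\beta"]&\Q\bK^{d}_{n-1}\arrow[r]&0
\end{tikzcd}
\end{center}
Its top row is exact at $\cQ_{d,n}$ by Lemma \ref{lem:exactinmiddle} and exact at $\cQ_{d,n-1}$ since $\pi_*$ is surjective; its bottom row is short exact by Lemma \ref{lem:exactsequence}; and by the inductive hypothesis at $n-1$ (valid since $1\le d-1\le n-4$ and $1\le d\le n-4$) the maps $\phi_{d-1,n-1}$ and $\phi_{d,n-1}$ are isomorphisms. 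A short diagram chase then gives injectivity of $\phi_{d,n}$: if $\phi_{d,n}(x)=0$ then $\phi_{d,n-1}(\pi_*x)=\beta(\phi_{d,n}(x))=0$, so $\pi_*x=0$; by exactness $x=\pi^*(y)$, and $\alpha(\phi_{d-1,n-1}(y))=\phi_{d,n}(\pi^*y)=0$ forces $\phi_{d-1,n-1}(y)=0$, hence $y=0$ and $x=0$. Since $\phi_{d,n}$ is also surjective, it is an isomorphism.

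\textbf{Main obstacle.} The genuinely hard work is upstream, in the surjectivity of the combinatorial maps $\odd_n$, $\tilde{\phi}_{-1,n}$, $\tilde{\phi}_{0,n}$ of Lemmas \ref{lem:evenoddsurjective}--\ref{lem:n3surjective}; the theorem itself is essentially formal once those are available. The only thing to be careful about here is the bookkeeping of index ranges, so that the inductive hypothesis and the three anchors ($d=1$ via the dimension count, $d=n-3$ via Lemma \ref{lem:nminus3}, $d=0$ via Lemma \ref{lem:n3surjective}) always apply precisely where they are invoked, and that the diagram chase only ever references columns $\phi_{\bullet,n-1}$ that are known to be isomorphisms.
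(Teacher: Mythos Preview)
Your proof is correct and follows essentially the same inductive scheme as the paper: propagate along the commutative diagram of Lemma~\ref{lem:alphabetacommutewithpushpull}, anchored at $d=n-3$ by Lemma~\ref{lem:nminus3}, at the $d=0$ boundary by Lemma~\ref{lem:n3surjective}, and at $d=1$ by the dimension count of Lemma~\ref{lem:dimensionsequal}. The only difference is packaging: the paper runs a single induction on $n$ and invokes the Five Lemma directly (the outer verticals $\phi_{d,n}$ and $\phi_{d+1,n}$ are isomorphisms by hypothesis, so the middle $\phi_{d+1,n+1}$ is too), whereas you split into two passes---surjectivity via Lemma~\ref{lem:fourvariant}, then injectivity via a separate diagram chase---which amounts to proving the two halves of the Five Lemma by hand.
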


\begin{proof}[Inductive proof of Theorem \ref{thm:root}]

 \noindent\textbf{Base case:} $n=4$; then $1\le d\le n-3$ implies that $d=1=n-3$. By Lemma \ref{lem:nminus3}, $\phi_{1,4}$ is an isomorphism.

 \noindent\textbf{Inductive hypothesis:} For some $n\ge4,$ and for all $d$ with $1\le d\le n-3,$ we have that $\phi_{d,n}:\cQ_{d,n}\to \Q\bK^{d}_{n}$ is an isomorphism. 
 
 \noindent\textbf{Inductive step:} For $1\le d\le (n-4)$, we have the following diagram, which commutes by Lemma \ref{lem:alphabetacommutewithpushpull}:
\begin{center}
    \begin{tikzcd}
      \Ker(\pi^*)\arrow[r]\arrow[d]&\cQ_{d,n}\arrow[r,"\pi^*"]\arrow[d,"\phi_{d,n}","\cong"
      swap]&\cQ_{d+1,n+1}\arrow[r,"\pi_*"]\arrow[d,"\phi_{d+1,n+1}"]&\cQ_{d+1,n}\arrow[r]\arrow[d,"\phi_{d+1,n}","\cong" swap]&0\\
      0\arrow[r]&\Q\bK^{d}_{n}\arrow[r,"\alpha"]&\Q\bK^{d+1}_{n+1}\arrow[r,"\beta"]&\Q\bK^{d+1}_{n}\arrow[r]&0
    \end{tikzcd}
  \end{center}
  The top row is exact by Lemma \ref{lem:exactinmiddle}, and the bottom row is exact by Lemma \ref{lem:exactsequence}. By the inductive hypothesis, $\phi_{d,n}$ and $\phi_{d+1,n}$ are isomorphisms. So, by the Five Lemma, $\phi_{d+1,n+1}$ is an isomorphism. Combining the above argument with Lemma \ref{lem:nminus3}, we conclude that for $2\le d\le (n+1)-3$, $\phi_{d,n+1}$ is an isomorphism. We also have the following diagram, which commutes by Lemma \ref{lem:alphabetacommutewithpushpull}:
\begin{center}
    \begin{tikzcd}
      &\Q\bSP_{0,n}\arrow[r,"\tilde{\pi}^*"]\arrow[d,"\tilde{\phi}_{0,n}"]&\cQ_{1,n+1}\arrow[r,"\pi_*"]\arrow[d,"\phi_{1,n+1}"]&\cQ_{1,n}\arrow[r]\arrow[d,"\phi_{1,n}",
      "\cong" swap]&0\\
      0\arrow[r]&\Q\bK^{0}_{n}\arrow[r,"\alpha"]&\Q\bK^{1}_{n+1}\arrow[r,"\beta"]&\Q\bK^{1}_{n}\arrow[r]&0
    \end{tikzcd}
  \end{center}
where the bottom row is exact and the top row is a complex, exact at $\cQ_{1,n}$. By Lemma \ref{lem:n3surjective}, $\tilde{\phi}_{0,n}$ is surjective, and by the inductive hypothesis, $\phi_{1,n}$ is an isomorphism. By Lemma \ref{lem:fourvariant}), $\phi_{1,n+1}$ is surjective. But by Lemma \ref{lem:dimensionsequal}, $\dim\cQ_{1,n+1}=\dim
\Q\bK^1_{n+1}$, so $\phi_{1,n+1}$ is an isomorphism.

\end{proof}

\subsection{Theorem \ref{thm:Main} and its proof}

\begin{thm}\label{thm:Main} For $n\ge4$ and $d$ such that $1\le d\le n-3$:  
\begin{enumerate}[label=(\roman*)]
\item We have $\cK_n^d= \cV_{d,n}^{\perp}$.\label{it:KappaPerpExactly}
\item The pairing $\cQ_{d,n}\times\cK_{n}^d\to\Q$ is perfect. \label{it:KappaTypeIPairingPerfect}
\item The set $\{\kappa_{d}^T\thickspace|\thickspace\abs{T}\ge(d+3), \abs{T}\equiv (d+3)\mod 2\}$ is an $S_n$-equivariant basis for $\cK^{d}_{n}$. \label{it:KappaBasis}
\item The $S_n$ actions on $\cQ_{d,n}$ and $\cK^{d}_{n}$ are isomorphic to the permutation representation induced by the natural action of $S_n$ on the set $\{T\subseteq[n]\thickspace|\thickspace\abs{T}\ge(d+3), \abs{T}\equiv (d+3)\mod 2\}$.  \label{it:Representations}
\item The following (dual) sequences are exact:\label{it:Sequences}
    \begin{align}
      &0\to\cQ_{d,n}\xrightarrow{\pi^*}{}\cQ_{d+1,n+1}\xrightarrow{\pi_*}{}\cQ_{d+1,n}\to0\label{eq:seqQ}\\
       &0\to\cK^{d+1}_{n}\xrightarrow{\pi^*}{}\cK^{d+1}_{n+1}\xrightarrow{\pi_*}{}\cK^d_{n}\to0 \label{eq:seqK}
    \end{align}
\end{enumerate}
\end{thm}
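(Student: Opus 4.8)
The theorem is a formal consequence of Theorem~\ref{thm:root}, which identifies $\cQ_{d,n}$ with $\Q\bK^{d}_{n}$ via $\phi_{d,n}$; what remains is linear algebra together with careful bookkeeping of the duality between the spaces $\cQ_{d,n}$ and $\cK^{d}_{n}$. The first step is to upgrade Theorem~\ref{thm:root} to a statement about $\cK^{d}_{n}$ itself: the tautological surjection $\psi_{d,n}\colon\Q\bK^{d}_{n}\to\cK^{d}_{n}$ of Definition~\ref{Def:bKintocK} is an \emph{isomorphism}. Only injectivity needs proof. Suppose $\psi_{d,n}(\xi)=\sum_{T\in\bK^{d}_{n}}c_{T}\kappa_{d}^{T}=0$ in $\cA^{d}_{n}$, where $\xi=\sum_{T}c_{T}\,T$. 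Since boundary strata span $\cA_{d,n}$ and the intersection pairing is non-degenerate, this vanishing is equivalent to $[X_{\tau}]\cdot\psi_{d,n}(\xi)=0$ for every $d$-dimensional boundary stratum $X_{\tau}$; the Type~II ones contribute zero automatically by Lemma~\ref{lem:pairkappapullbackswithstrata}, and for Type~I strata $X_{\sigma}$ Lemma~\ref{lem:combinatorialpairing} rewrites the condition as $\phi_{d,n}([X_{\sigma}])(\xi)=0$. Since Type~I classes span $\cQ_{d,n}$, this says $\phi_{d,n}(q)(\xi)=0$ for all $q\in\cQ_{d,n}$; as $\phi_{d,n}$ is surjective onto $(\Q\bK^{d}_{n})^{\vee}$, this forces $\xi=0$. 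Because $\psi_{d,n}$ is $S_{n}$-equivariant and carries the permutation basis $\bK^{d}_{n}$ of $\Q\bK^{d}_{n}$ onto $\{\kappa_{d}^{T}\mid\abs{T}\ge d+3,\ \abs{T}\equiv d+3\bmod 2\}$, this gives part~\ref{it:KappaBasis}; part~\ref{it:Representations} follows at once, since $\cK^{d}_{n}\cong\Q\bK^{d}_{n}$ via $\psi_{d,n}$ while $\cQ_{d,n}\cong(\Q\bK^{d}_{n})^{\vee}\cong\Q\bK^{d}_{n}$ via $\phi_{d,n}$ and the canonical $S_{n}$-equivariant self-duality of a permutation module.

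For part~\ref{it:KappaPerpExactly} I would count dimensions. Corollary~\ref{cor:kappaandQduals}\ref{it:KappaPerp} already gives $\cK^{d}_{n}\subseteq\cV_{d,n}^{\perp}$, and $\cV_{d,n}^{\perp}$ is canonically $(\cA_{d,n}/\cV_{d,n})^{\vee}=\cQ_{d,n}^{\vee}$, so $\dim\cV_{d,n}^{\perp}=\dim\cQ_{d,n}=\dim\Q\bK^{d}_{n}$ by Theorem~\ref{thm:root}; combined with $\dim\cK^{d}_{n}=\dim\Q\bK^{d}_{n}$ from the isomorphism $\psi_{d,n}$, this forces $\cK^{d}_{n}=\cV_{d,n}^{\perp}$. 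Part~\ref{it:KappaTypeIPairingPerfect} is then immediate: the pairing of Corollary~\ref{cor:kappaandQduals}\ref{it:KappaTypeIPairing} is the evaluation pairing of $\cQ_{d,n}$ against $\cQ_{d,n}^{\vee}=\cK^{d}_{n}$, hence perfect (equivalently, $\psi_{d,n}$ and $\phi_{d,n}$ identify it with the perfect pairing $\cQ_{d,n}\times\Q\bK^{d}_{n}\to\Q$, $\Pi\cdot T=\langle\Pi,T\rangle$).

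For part~\ref{it:Sequences} I would first prove exactness of the sequence~\eqref{eq:seqQ}. Surjectivity of $\pi_{*}\colon\cQ_{d+1,n+1}\to\cQ_{d+1,n}$ and exactness in the middle are already recorded (respectively the remark preceding Lemma~\ref{lem:exactinmiddle}, and Lemma~\ref{lem:exactinmiddle} itself), so only injectivity of $\pi^{*}\colon\cQ_{d,n}\to\cQ_{d+1,n+1}$ is needed; but the commuting square $\phi_{d+1,n+1}\circ\pi^{*}=\alpha\circ\phi_{d,n}$ of Lemma~\ref{lem:alphabetacommutewithpushpull}, together with injectivity of $\alpha$ (Lemma~\ref{lem:exactsequence}) and of $\phi_{d,n}$ (Theorem~\ref{thm:root}), gives it immediately. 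The sequence~\eqref{eq:seqK} is then obtained by dualizing~\eqref{eq:seqQ}: by the projection formula, under the perfect pairings of part~\ref{it:KappaTypeIPairingPerfect} the map $\pi^{*}\colon\cK^{d+1}_{n}\to\cK^{d+1}_{n+1}$ is dual to $\pi_{*}\colon\cQ_{d+1,n+1}\to\cQ_{d+1,n}$, and $\pi_{*}\colon\cK^{d+1}_{n+1}\to\cK^{d}_{n}$ is dual to $\pi^{*}\colon\cQ_{d,n}\to\cQ_{d+1,n+1}$, and the dual of a short exact sequence of finite-dimensional vector spaces is exact.

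Since the genuinely difficult content --- the induction proving Theorem~\ref{thm:root} --- is already in hand, the main obstacle in what remains is purely organizational: keeping the two pairings $\cQ_{d,n}\times\cK^{d}_{n}\to\Q$ and $\cQ_{d,n}\times\Q\bK^{d}_{n}\to\Q$ consistently identified under $\psi_{d,n}$, and getting the variances right when dualizing~\eqref{eq:seqQ} to~\eqref{eq:seqK}. I would also check the small edge cases ($d=n-3$, and $\bK^{d}_{n}$ or $\bK^{d+1}_{n}$ empty, where several of the spaces in play are $0$ or one-dimensional) directly rather than leaning on the general statements, using Lemma~\ref{lem:nminus3} where appropriate.
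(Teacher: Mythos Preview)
Your argument is correct and follows essentially the same route as the paper. The only organizational difference is the order in which you unwind the linear algebra: the paper introduces the map $\eta_{d,n}\colon\cQ_{d,n}\to(\cK^{d}_{n})^{\vee}$, uses the factorization $\phi_{d,n}=(\psi_{d,n})^{\vee}\circ\eta_{d,n}$ to show $\eta_{d,n}$ is an isomorphism first, and then deduces that $\psi_{d,n}$ is an isomorphism; you instead prove injectivity of $\psi_{d,n}$ directly (your pairing argument against Type~I strata is exactly the statement that $(\psi_{d,n})^{\vee}$ is injective when composed with the isomorphism $\phi_{d,n}^{-1}$), and then recover \ref{it:KappaPerpExactly} by a dimension count. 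For part~\ref{it:Sequences}, the paper transports \eqref{eq:seqQ} wholesale to the exact sequence \eqref{eq:seqbK} via the isomorphisms $\phi_{*,*}$, whereas you verify injectivity of $\pi^{*}$ on $\cQ$ from the left square of Lemma~\ref{lem:alphabetacommutewithpushpull}; these are the same fact read two ways.
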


\begin{proof}
Recall the map $\psi_{d,n}:\Q\bK^d_n\to\cK^d_n$ given in Definition \ref{Def:bKintocK}. We have compatible pairings $\cQ_{d,n}\times\cK_{n}^d\to\Q$ and $\cQ_{d,n}\times\Q\bK_{n}^d\to\Q$, inducing maps $\eta_{d,n}:\cQ_{d,n}\to(\cK_{n}^d)^{\vee}$ and $\phi_{d,n}:\cQ_{d,n}\to(\Q\bK_{n}^d)^{\vee}=\Q\bK_{n}^d$, where, $\phi_{d,n}$ is as in Definition \ref{def:phi}. These maps satisfy: $\phi_{d,n}=(\psi_{d,n})^{\vee}\circ\eta_{d,n}$. By Theorem \ref{thm:root}, $\phi_{d,n}$ is an isomorphism, which implies that $\eta_{d,n}$ is injective. On the other hand, we have by Corollary \ref{cor:kappaandQduals} that $\cK^d_n\subset\cV_{d,n}^{\perp}=(\cQ_{d,n})^{\vee}$, so $(\eta_{d,n})^{\vee}$ is injective as well. This implies that $\eta_{d,n}$ is an isomorphism, proving items \ref{it:KappaTypeIPairingPerfect} and \ref{it:KappaPerpExactly}. Since $\eta_{d,n}$ and $\phi_{d,n}$ are both isomorphisms, we conclude that so is $\psi_{d,n}$, proving item \ref{it:KappaBasis}, and thus also item \ref{it:Representations}. Finally, by Theorem \ref{thm:root} and Lemma \ref{lem:alphabetacommutewithpushpull}, the sequence in Equation \ref{eq:seqQ} is dual to the sequence in Equation \ref{eq:seqbK}, which is exact by Lemma \ref{lem:exactsequence}. We conclude that the sequence in Equation \ref{eq:seqQ} is exact. The sequence in Equation \ref{eq:seqQ} is dual to the sequence in Equation \ref{eq:seqK}, so the latter sequence is exact, proving item \ref{it:Sequences}
\end{proof}

\bibliographystyle{amsalpha}
\bibliography{../BigRefs}

\providecommand{\bysame}{\leavevmode\hbox to3em{\hrulefill}\thinspace}
\providecommand{\MR}{\relax\ifhmode\unskip\space\fi MR }
% \MRhref is called by the amsart/book/proc definition of \MR.
\providecommand{\MRhref}[2]{%
  \href{http://www.ams.org/mathscinet-getitem?mr=#1}{#2}
}
\providecommand{\href}[2]{#2}
\begin{thebibliography}{{Pan}11}

\bibitem[AC96]{ArbarelloCornalba1996}
Enrico Arbarello and Maurizio Cornalba, \emph{Combinatorial and
  algebro-geometric cohomology classes on the moduli spaces of curves}, J. Alg.
  Geom \textbf{5} (1996), 705--749.

\bibitem[AC98]{ArbarelloCornalba1998}
\bysame, \emph{Calculating cohomology groups of moduli spaces of curves via
  algebraic geometry}, Publications Math{\'e}matiques de l'Institut des Hautes
  {\'E}tudes Scientifiques \textbf{88} (1998), no.~1, 97--127.

\bibitem[BM13]{BergstromMinabe2013}
Jonas Bergstr{\"o}m and Satoshi Minabe, \emph{On the cohomology of moduli
  spaces of (weighted) stable rational curves}, Mathematische Zeitschrift
  \textbf{275} (2013), no.~3-4, 1095--1108.

\bibitem[CT20]{CastravetTevelev2020}
Ana-Maria Castravet and Jenia Tevelev, \emph{Derived category of moduli of
  pointed curves -- {II}}, ArXiv e-prints (2020),
  \href{http://arxiv.org/abs/2002.02889}{\texttt{arXiv:2002.02889}}.

\bibitem[CY11]{CavalieriYang2011}
Renzo Cavalieri and Stephanie Yang, \emph{Tautological pairings on moduli
  spaces of curves}, Proceedings of the American Mathematical Society
  \textbf{139} (2011), no.~1, 51--62.

\bibitem[D.83]{Mumford1983}
Mumford D., \emph{Towards an enumerative geometry of the moduli space of
  curves}, Artin M., Tate J. (eds) Arithmetic and Geometry, Progress in
  Mathematics \textbf{36} (1983).

\bibitem[Fab99]{faber_1999}
Carel Faber, \emph{Algorithms for computing intersection numbers on moduli
  spaces of curves, with an application to the class of the locus of
  {Jacobians}}, London Mathematical Society Lecture Note Series, p.~93–110,
  Cambridge University Press, 1999.

\bibitem[FG03]{FarkasGibney2003}
Gavril Farkas and Angela Gibney, \emph{The {M}ori cones of moduli spaces of
  pointed curves of small genus}, Transactions of the American Mathematical
  Society \textbf{355} (2003), 1183--1199.

\bibitem[Get95]{Getzler1995}
Ezra Getzler, \emph{Operads and moduli spaces of genus 0 {R}iemann surfaces},
  The Moduli Space of Curves (Boston, MA) (Robbert~H. Dijkgraaf, Carel~F.
  Faber, and Gerard B.~M. van~der Geer, eds.), Birkh{\"a}user Boston, 1995,
  pp.~199--230.

\bibitem[Ion05]{Ionel2005}
Eleny-Nicoleta Ionel, \emph{Relations in the tautological ring of
  $\mathcal{M}_g$}, Duke Mathematical Journal \textbf{129} (2005).

\bibitem[Kee92]{Keel1992}
Sean Keel, \emph{Intersection theory on the moduli space of stable $n$-pointed
  curves of genus zero}, Transactions of the American Mathematical Society
  \textbf{330} (1992), no.~2.

\bibitem[KM94]{KontsevichManin1994}
Maxim Kontsevich and Yuri Manin, \emph{Gromov-{W}itten classes, quantum
  cohomology, and enumerative geometry}, Comm. Math. Phys. \textbf{164} (1994),
  no.~3, 525--562.

\bibitem[Koc13]{Koch2013}
Sarah Koch, \emph{Teichm{\"u}ller theory and critically finite endomorphisms},
  Advances in Mathematics \textbf{248} (2013), 573--617.

\bibitem[{Pan}11]{Pandharipande2011}
Rahul {Pandharipande}, \emph{{The kappa classes on the moduli spaces of
  curves}}, ArXiv e-prints (2011),
  \href{http://arxiv.org/abs/1108.5984}{\texttt{arXiv:1108.5984}}.

\bibitem[Pan12]{Pandharipande2012}
Rahul Pandharipande, \emph{The $\kappa$ ring of the moduli of curves of compact
  type}, Acta Math. \textbf{208} (2012), no.~2, 335--388.

\bibitem[Pan18]{Pandharipande2016}
R.~Pandharipande, \emph{A calculus for the moduli space of curves}, Proceedings
  of Algebraic geometry – Salt Lake City 2015, Proc. Sympos. Pure Math.
  \textbf{97} (2018), Part 1, 459--488.

\bibitem[PP]{PandharipandePixton2013}
R.~Pandharipande and A.~Pixton, \emph{Relations in the tautological ring of the
  moduli space of curves}, Mumford's 80th birthday volume, to appear.

\bibitem[PPZ16]{PPZ2016}
R.~Pandharipande, A.~Pixton, and Dimitri Zvonkine, \emph{Tautological relations
  via r-spin structures}, Journal of Algebraic Geometry (2016).

\bibitem[Ram17]{RamadasThesis}
Rohini Ramadas, \emph{Dynamics on the moduli space of pointed rational curves},
  Ph.D. thesis, University of Michigan, 2017.

\bibitem[Ram18]{Ramadas2015}
\bysame, \emph{{Hurwitz correspondences on compactifications of
  $\mathcal{M}_{0,N}$}}, Advances in Mathematics \textbf{323} (2018), 622--667.

\bibitem[Ram19a]{Ramadas2019}
\bysame, \emph{{Algebraic stability of meromorphic maps descended from
  Thurston's pullback map}}, Transactions of the AMS; to appear (2019),
  \href{https://arxiv.org/abs/1904.08000}{\texttt{arXiv:1904.08000}}.

\bibitem[Ram19b]{Ramadas2016}
Rohini Ramadas, \emph{Dynamical degrees of {Hurwitz} correspondences}, Ergodic
  Theory and Dynamical Systems (2019), 1–23.

\bibitem[RS20]{RamadasSilversmith1}
Rohini Ramadas and Rob Silversmith, \emph{{Two-dimensional cycle classes on
  $\overline{\mathcal{M}}_{0,n}$}}, ArXiv e-prints (2020),
  \href{https://arxiv.org/abs/2004.05491}{\texttt{arXiv:2004.05491}}.

\end{thebibliography}
\end{document}